\theoremstyle{plain}
\newtheorem{theo}{Theorem}
\newtheorem{prop}{Proposition}
\newtheorem{lemma}{Lemma}
\newcommand{\tLeg}[2]{\genfrac{(}{)}{}{1}{#1}{#2}}  % The Legendre symbol in text style
\newcommand{\abs}[1]{\lvert #1 \rvert}              % Absolute value
\newcommand{\conj}[1]{\bar{#1}}                     % Complex conjugate
\newcommand{\conjl}[1]{\overline{#1}}               % Complex conjugate with long bar
\newcommand{\field}[1]{\mathbb{#1}}                 % Fields etc. in bb
\newcommand{\Fq}[1][]{\field{F}_{\!q^{#1}}}         % Field F_{q^#}
\newcommand{\Fqa}[1][]{\field{F}_{\!q^{#1}}^\ast}   % Field F_{q^#}^\ast (with an asterisk)
\newcommand{\gen}[1]{\langle #1 \rangle}            % Stuff generated by the argument
\newcommand{\num}[1]{\lvert #1 \rvert}              % Order of a set
\newcommand{\F}{\field{F}}                          % Field F
\newcommand{\imu}{\mathsf{i}}                       % The imaginary unit
\newcommand{\Nap}{\mathsf{e}}                       % The Napier's number
\newcommand{\tc}{\text{,}}                          % Text comma
\newcommand{\tdots}{\tc\ldots\tc\ }                 % Text ellipse
\newcommand{\tp}{\text{.}}                          % Text period
\newcommand{\Z}{\field{Z}}                          % The integers
\newcommand{\cck}{\conjl{\chi_3}}                   % The conjugate of chi_3
\newcommand{\ccn}{\conjl{\chi_4}}                   % The conjugate of chi_4
\newcommand{\ck}{\chi_3}                            % Character of order 3
\newcommand{\cn}{\chi_4}                            % Character of order 4
\newcommand{\cpk}{\conjl{\pi_3}}                    % The conjugate of pi_3
\newcommand{\cpn}{\conjl{\pi_4}}                    % The conjugate of pi_4
\newcommand{\pk}{\pi_3}                             % Self-explanotary, needed for J_3
\newcommand{\pn}{\pi_4}                             % Self-explanotary, needed for J_4 
\DeclareMathOperator{\ind}{ind}                     % (Mult.) index of an element
\DeclareMathOperator{\Norm}{Norm}                   % Norm
\DeclareMathOperator{\ord}{ord}                     % (Mult.) order of an element
\DeclareMathOperator{\Real}{Re}                     % The real part
\DeclareMathOperator{\Tr}{Tr}                       % Trace
\DeclareMathOperator{\tr}{tr}                       % Absolute trace
\begin{document}

\title[Irreducible polynomials]{Irreducible polynomials with prescribed trace and restricted norm}

\author{K.~Kononen}
\address{K.~Kononen: Department of Mathematical Sciences, University of Oulu, P.O.\ Box 3000, FIN-90014 OULUN YLIOPISTO, Finland}
\email{kkononen@paju.oulu.fi}

\author{M.~Moisio}
\address{M.~Moisio: Department of Mathematics and Statistics, University of Vaasa, P.O.\ Box 700, FIN-65101 VAASA, Finland}
\email{mamo@uwasa.fi}

\author{M.~Rinta-aho}
\address{M.~Rinta-aho: Department of Mathematical Sciences, University of Oulu, P.O.\ Box 3000, FIN-90014 OULUN YLIOPISTO, Finland}
\email{marko.rinta-aho@oulu.fi}

\author{K.~V\"a\"an\"anen}
\address{K.~V\"a\"an\"anen: Department of Mathematical Sciences, University of Oulu, P.O.\ Box 3000, FIN-90014 OULUN YLIOPISTO, Finland}
\email{keijo.vaananen@oulu.fi}

\keywords{Irreducible polynomials; Monomial exponential sums; Gauss sums; Jacobi sums}

\begin{abstract}
Let $\Fq$, $q = p^r$, be a finite field with a primitive element $g$. 
In this paper we use exponential sums and Jacobi sums to compute the number of the irreducible polynomials of degree $m$ over $\Fq$ 
with trace fixed and norm restricted to a coset of a subgroup $\gen{g^s}$, $s \mid (q-1)$. 
We give the number explicitly for $s=2$, $3$, $4$ when $q=p$, and for $s \mid (p^e+1)$ when $r = 2en$. 
Finally, we give explicit formulae for the number when both trace and norm are fixed, $p=2$ and $m \le 30$. 
\end{abstract}

\maketitle

\section{introduction}

Let $p$ be a prime number, $\Fq$ a finite field with $q=p^r$ elements, and $g$ a primitive element of $\Fq$. 
The explicit enumeration of irreducible polynomials 
\[
  f(x) = x^m - ax^{m-1} + \cdots + (-1)^mb \in \Fq[][x]
\] 
with some preassigned coefficients fixed is quite hard problem in general and it has been tackled only in certain special cases. 
For example, Carlitz \cite{Carl} and Yucas \cite{Yucas} obtained explicit formulae for the number of $f(x)$ with $a$ or $b$ fixed. 
Carlitz also obtained explicit formulae for the number of $f(x)$ with $a$ fixed and $b$ in a fixed coset of the group of squares in $\Fqa$. 
Moisio \cite{MoiKlo} considered the enumeration problem when both $a$ and $b$ are fixed, and gave the number of $f(x)$ 
in terms of exponential sums and in terms of the number of rational points on certain algebraic varieties defined over $\Fq$. 
Especially, the number of irreducible cubic polynomials with $a$ and $b$ fixed was given in terms of cubic Gauss sums (case $a=0$) 
and in terms of the number of rational points on the elliptic curves over $\Fq$ defined by $\mathcal{E} : y^2 + ba^{-3} y + xy = x^3$, 
which, in a way, indicates the hardness of the explicit enumeration problem. 
For results on the enumeration problem when some other coefficients than $a$ and $b$ are fixed we refer to the survey by Cohen \cite{Cohen}, 
and to a recent work by Moisio and Ranto \cite{MoiRa}.

The aim of this paper is to generalize results of \cite{Carl} by giving explicit formulae for the number of $f(x)$ with $a$ fixed 
and $b$ in a fixed coset of a subgroup $\gen{g^s}$. 
Actually, we shall do this in the following three special cases: 

\begin{itemize}
\item
$s=2$ (Carlitz's case),

\item
$s=3$,

\item
$s=4$,

\item
$r=2en$ and $s\,(>1)$ is any factor of $p^e+1$.
\end{itemize}
Moreover, we shall give explicit formulae for the number of $f(x)$ with both $a$ and $b$ fixed under the assumptions $q = 2^r$ and $m \le 30$. 

The method used in this paper is essentially the one used in \cite{MoiKlo} but here explicit evaluation of Jacobi sums 
and certain exponential sums are used instead of the theory of algebraic varieties. 
We also note that our method is more elementary than the method used in \cite{Carl} in the sense that the use of L-functions is avoided.

%%
%%% New section
%%

\section{Notations and basic formulae} \label{sec:notbasf}

We fix the following notations.

\vspace{1ex}
\begin{center}
\begin{tabular}{c@{\hspace{1em}}p{0.63\textwidth}}
$p$, $q$, $r$, $s$, $h$, $m$, $t$ 
                     & positive integers, $p$ prime, $q = p^r$, $s \mid (q-1)$, $h \in \{ 0, 1, \ldots, s-1 \}$, $m \ge 2$, $t \mid m$ \\
     $d$, $l$        & $d = \gcd(\tfrac{m}{t}, s)$, $l = \gcd(t, \tfrac{s}{d})$                          \\
 $\Tr_t$, $\Norm_t$  & the trace and norm from $\Fq[t]$ onto $\Fq$                                       \\
$\gamma = \gamma_m$  & a fixed primitive element of $\Fq[m]$                                             \\
     $\gamma_t$      & the primitive element of $\Fq[t]$ that is the norm of $\gamma$ onto $\Fq[t]$      \\
         $g$         & $\Norm_m (\gamma_m)$, a primitive elment of $\Fq$; also $g = \gamma_1 = \Norm_t (\gamma_t)$ \\
   $P_m(a, s, h)$    & the number of the irreducible polynomials $f(x) = x^m - ax^{m-1} + \cdots + (-1)^m b \in \Fq[][x]$, 
                       where $a$ is fixed and $b \in g^h \gen{g^s} \subseteq \Fqa$                       \\
$S_t = S_t(a, s, h)$ & the set of $x$ in $\Fq[t]$ with $\Tr_m(x) = a$ and $\Norm_m(x) \in g^h \gen{g^s}$ \\
$T_t = T_t(a, s, h)$ & the set of $x$ in $S_t$ with $x \notin \Fq[k]$ for any $k < t$                    \\
         $N_t$       & the number of elements in $S_t$                                                   \\
         $e_t$       & the canonical additive character $e_t(x) = \Nap^{2\pi \imu \tr_t(x)/p}$ of $\Fq[t]$, 
                       where $\tr_t$ is the absolute trace $\Fq[t] \rightarrow \F_{\!p}$                 \\
\end{tabular}
\end{center}
\vspace{1ex}

Note that $N_m = \sum_{t \mid m} \num{T_t}$ and the M\"obius inversion gives, see \cite[Lemma 1]{MoiKlo},
\begin{align} \label{eq:PmsumNt}
  P_m(a, s, h) = \frac{1}{m} \sum_{t \mid m} \mu(\tfrac{m}{t}) N_t \tp
\end{align}
Thus the knowledge of $N_t$ for $t \mid m$ gives $P_m(a, s, h)$, and therefore we consider $N_t$.

From the definition of $S_t$ it follows that
\[
  x \in S_t \iff \tfrac{m}{t} \Tr_t(x) = a \text{ and } \Norm_t (x^{\frac{m}{t}}) \in g^h \gen{g^s} \tp
\]
By denoting $x = \gamma_t^i$, $i \in \{ 0, 1, \ldots, q^t - 2 \}$, we see that the condition $\Norm_t (x^{m/t}) \in g^h \gen{g^s}$ is satisfied 
if and only if the congruence
\begin{align} \label{eq:icong}
  \frac{m}{t} i \equiv h \pmod s
\end{align}
holds. 
If $d \nmid h$ then \eqref{eq:icong} has no solution, and if $d \mid h$ then \eqref{eq:icong} has solutions
\begin{align} \label{eq:isols}
  i = i_0 + j \frac{s}{d} \tc \qquad j = 0 \tc\ 1 \tdots \frac{d}{s} (q^t - 1) - 1 \tc
\end{align}
where $i_0$ is the solution of
\begin{align} \label{eq:i0def}
  \frac{m}{dt} i_0 \equiv \frac{h}{d} \quad \bigl( \bmod \, \frac{s}{d} \bigr) \tc \qquad 0 \le i_0 < \frac{s}{d} \tp
\end{align}
Thus we obtain
\begin{lemma} \label{le:Ntspecial}
\begin{enumerate}[(i)]
\item
$N_t = 0$ if $d \nmid h$.

\item
$N_t = 0$ if $p \mid \tfrac{m}{t}$ and $a \ne 0$.

\item
$N_t = \tfrac{d}{s} (q^t - 1)$ if $p \mid \tfrac{m}{t}$, $d \mid h$ and $a=0$.
\end{enumerate}
\end{lemma}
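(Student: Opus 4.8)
The plan is to exploit the fact that the two conditions defining $S_t$ --- the trace equation $\tfrac{m}{t}\Tr_t(x) = a$ and the norm condition already encoded as the congruence \eqref{eq:icong} --- decouple, so that each part of the lemma can be read off from whichever condition becomes decisive. All the substantive work (the reduction of the norm condition to \eqref{eq:icong} and the enumeration of its solutions in \eqref{eq:isols}) has been carried out just before the statement, so the proof amounts to drawing the right consequence in each of the three cases.

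First I would dispose of part (i). For a nonzero $x = \gamma_t^i$ the norm condition is exactly \eqref{eq:icong}, and we have already noted that this congruence is insoluble when $d \nmid h$. Since $x = 0$ also fails the norm condition --- its $\Norm_m$-value is $0$, which lies outside the multiplicative coset $g^h \gen{g^s} \subseteq \Fqa$ --- no $x \in \Fq[t]$ can belong to $S_t$, whence $N_t = 0$. For parts (ii) and (iii) I would instead use the hypothesis $p \mid \tfrac{m}{t}$ to collapse the trace condition: reading $\tfrac{m}{t}$ as an integer multiplier acting on $\Tr_t(x) \in \Fq$, the assumption $p \mid \tfrac{m}{t}$ forces $\tfrac{m}{t}\Tr_t(x) = 0$ for every $x$, so the trace equation reduces to the scalar statement $a = 0$. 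If $a \ne 0$ this is never satisfied, giving $S_t = \emptyset$ and hence part (ii), independently of $h$. If instead $a = 0$, the trace equation holds identically, so membership in $S_t$ is governed solely by the norm condition; assuming further $d \mid h$, its solutions are precisely those listed in \eqref{eq:isols}.

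The only point requiring care --- and the closest thing to an obstacle in an otherwise immediate argument --- is the final count in part (iii): I must check that the admissible exponents $i = i_0 + j\tfrac{s}{d}$ fill the range $0 \le i \le q^t - 2$ evenly. This holds because $\tfrac{s}{d} \mid s \mid (q-1) \mid (q^t - 1)$, so exactly $\tfrac{d}{s}(q^t - 1)$ values of $i$ occur, each yielding a distinct nonzero $x = \gamma_t^i$. Therefore $N_t = \tfrac{d}{s}(q^t - 1)$, completing part (iii).
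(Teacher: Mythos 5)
Your proof is correct and follows essentially the same route as the paper: the paper establishes the equivalence of membership in $S_t$ with the trace equation $\tfrac{m}{t}\Tr_t(x)=a$ together with the congruence \eqref{eq:icong}, notes the insolubility for $d \nmid h$ and the solution count \eqref{eq:isols} for $d \mid h$, and then states the lemma as an immediate consequence, exactly as you do. Your explicit handling of $x=0$ and of the collapse of the trace condition when $p \mid \tfrac{m}{t}$ fills in the same (routine) details the paper leaves implicit.
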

In the remaining cases
\begin{align} \label{eq:restpd}
  p \nmid \tfrac{m}{t} \text{ and } d \mid h \tp
\end{align}
To state a formula for $N_t$ in this case we use the canonical additive character $e_t$.

\begin{lemma} \label{le:NtsumMt}
If (\ref{eq:restpd}) holds then
\[
  N_t = \frac{d}{sq} (q^t - 1 + M_t) \tc
\]
where
\begin{align} \label{eq:Mtdef}
  M_t = \sum_{c \in \Fqa} e_1 (-\tfrac{t}{m} ca) \sum_{x \in \Fqa[t]} e_t (c \gamma_t^{i_0} x^\frac{s}{d}) \tp
\end{align}
\end{lemma}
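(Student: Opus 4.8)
The plan is to evaluate $N_t = \num{S_t}$ by turning the two defining conditions on the nonzero elements $x \in \Fqa[t]$ into a single coset-membership condition together with one linear trace equation, and then to detect the trace equation with additive character orthogonality.

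First I would record that, under \eqref{eq:restpd}, an element $x = \gamma_t^i$ lies in $S_t$ precisely when the congruence \eqref{eq:icong} holds and $\tfrac{m}{t}\Tr_t(x) = a$. By \eqref{eq:isols} and \eqref{eq:i0def} the solutions of the congruence are the exponents $i \equiv i_0 \pmod{s/d}$, so the $x$ satisfying the norm condition are exactly the elements of the coset $\gamma_t^{i_0}\gen{\gamma_t^{s/d}}$. Since $s \mid (q-1) \mid (q^t-1)$ we have $\tfrac{s}{d} \mid (q^t-1)$, hence the $\tfrac{s}{d}$-th power map on $\Fqa[t]$ has image exactly $\gen{\gamma_t^{s/d}}$ and is $\tfrac{s}{d}$-to-one onto it. Consequently, as $x$ runs once over this coset it can be written as $x = \gamma_t^{i_0} w^{s/d}$ with $w$ running over $\Fqa[t]$, each coset element being produced exactly $\tfrac{s}{d}$ times.

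Next, because $p \nmid \tfrac{m}{t}$, the trace equation $\tfrac{m}{t}\Tr_t(x) = a$ is equivalent to $\Tr_t(x) = \tfrac{t}{m}a$ for a well-defined element $\tfrac{t}{m}a \in \Fq$. I would detect this using orthogonality of the additive characters of $\Fq$, writing
\[
  N_t = \sum_{x} \frac{1}{q}\sum_{c \in \Fq} e_1\bigl(c(\Tr_t(x) - \tfrac{t}{m}a)\bigr) \tc
\]
where $x$ ranges over the coset $\gamma_t^{i_0}\gen{\gamma_t^{s/d}}$. The crucial identity is transitivity of the trace: for $c \in \Fq$ one has $\tr_t(cx) = \tr_1\bigl(\Tr_t(cx)\bigr) = \tr_1\bigl(c\,\Tr_t(x)\bigr)$, so $e_1\bigl(c\,\Tr_t(x)\bigr) = e_t(cx)$. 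Substituting this together with the parametrization $x = \gamma_t^{i_0} w^{s/d}$ (dividing out the $\tfrac{s}{d}$-fold multiplicity) gives
\[
  N_t = \frac{d}{sq}\sum_{c \in \Fq} e_1\bigl(-\tfrac{t}{m}ca\bigr)\sum_{w \in \Fqa[t]} e_t\bigl(c\gamma_t^{i_0} w^{s/d}\bigr) \tp
\]
Finally I would split off the term $c = 0$, which contributes $e_1(0)\sum_{w} 1 = q^t - 1$, leaving the sum over $c \in \Fqa$, which is exactly $M_t$ of \eqref{eq:Mtdef}; this yields $N_t = \tfrac{d}{sq}(q^t - 1 + M_t)$.

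The routine parts are the orthogonality relation and the splitting of the $c=0$ term. The step deserving the most care is the passage from the coset $\gamma_t^{i_0}\gen{\gamma_t^{s/d}}$ to the parametrization by $\tfrac{s}{d}$-th powers: one must verify that $\tfrac{s}{d}$ divides $q^t - 1$ so that the power map surjects onto the right subgroup with constant fibre size $\tfrac{s}{d}$, and one must track this multiplicity, since it is precisely what produces the factor $\tfrac{d}{s}$ in front.
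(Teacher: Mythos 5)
Your proof is correct and follows essentially the same route as the paper's: both detect the trace condition $\Tr_t(x)=\tfrac{t}{m}a$ by orthogonality of additive characters of $\Fq$, convert $e_1(c\Tr_t(x))$ to $e_t(cx)$ by transitivity of the trace, and reparametrize the coset $\gamma_t^{i_0}\gen{\gamma_t^{s/d}}$ by the $\tfrac{s}{d}$-to-one power map $x\mapsto x^{s/d}$ before splitting off the $c=0$ term. The only cosmetic difference is that the paper sums over the exponents $i_0+j\tfrac{s}{d}$ directly and performs the power-map substitution at the end, whereas you introduce it at the outset; the multiplicity bookkeeping is identical.
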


\begin{proof}
By the definition of $S_t$, equation \eqref{eq:isols} and the orthogonality of characters we obtain
\begin{align*}
  qN_t &= \sum_{j=0}^{\frac{d}{s}(q^t-1) - 1} \sum_{c \in \Fq} e_1 \bigl( c (\Tr_t(\gamma_t^{i_0 + j\frac{s}{d}}) - \tfrac{t}{m} a) \bigr) \\
       &= \sum_{c \in \Fq} e_1(-\tfrac{t}{m} ca) \sum_{j=0}^{\frac{d}{s}(q^t-1) - 1} e_1 \bigl( \Tr_t (c \gamma_t^{i_0 + j\frac{s}{d}}) \bigr) 
          \displaybreak[1]\\
       &= \sum_{c \in \Fq} e_1(-\tfrac{t}{m} ca) \sum_{j=0}^{\frac{d}{s}(q^t-1) - 1} e_t (c \gamma_t^{i_0 + j\frac{s}{d}}) \\
       &= \frac{d}{s} \sum_{c \in \Fq} e_1(-\tfrac{t}{m} ca) \sum_{x \in \Fqa[t]} e_t (c \gamma_t^{i_0} x^\frac{s}{d}) \tp
\end{align*}
This proves Lemma \ref{le:NtsumMt}.
\end{proof}

We shall now consider separately the cases $a=0$ and $a \ne 0$. 
For this consideration let $n \mid (q-1)$ and let $H_n$ denote the subgroup of order $n$ of the multiplicative character group of $\Fq$ 
and $H_n^\ast = H_n \setminus \{ \lambda_0 \}$, where $\lambda_0$ is the trivial multiplicative character of $\Fq$. 
If $\lambda$ is a multiplicative character of $\Fq$, then $\lambda \circ \Norm_t$ is a multiplicative character of $\Fq[t]$. 
We define the Gauss sum $G_t(\lambda)$ over $\Fq[t]$ as follows:
\[
  G_t(\lambda) = \sum_{x \in \Fqa[t]} e_t(x) (\lambda \circ \Norm_t)(x) \tp
\]
The following Lemma 4 of \cite{MoiKlo} gives a connection of these sums with monomial exponential sums.

\begin{lemma} \label{le:monGcon}
Let $n$ be a positive factor of $q-1$ and let $\alpha \in \Fqa[t]$. 
Then
\[
  \sum_{x \in \Fqa[t]} e_t(\alpha x^n) = \sum_{\lambda \in H_n} G_t(\conj{\lambda}) (\lambda \circ \Norm_t) (\alpha) \tc
\]
where $\conj{\lambda} = \lambda^{-1}$.
\end{lemma}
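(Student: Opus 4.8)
The plan is to convert the monomial exponential sum into a combination of Gauss sums by expressing the number of $n$-th roots of an element through multiplicative characters, and then recognising the relevant characters of $\Fq[t]$ as the pullbacks $\lambda\circ\Norm_t$ with $\lambda\in H_n$.

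First I would count roots. For $y\in\Fqa[t]$ the number of $x\in\Fqa[t]$ with $x^n=y$ equals $\sum_\chi \chi(y)$, the sum extending over the multiplicative characters $\chi$ of $\Fq[t]$ of order dividing $n$; since $n\mid(q-1)$ and $q-1\mid(q^t-1)$, there are exactly $n$ such characters and this count is $n$ if $y$ is an $n$-th power and $0$ otherwise. Grouping $\sum_x e_t(\alpha x^n)$ by the value $y=x^n$ and inserting this identity gives
\[
  \sum_{x\in\Fqa[t]} e_t(\alpha x^n) = \sum_{\ord(\chi)\mid n}\;\sum_{y\in\Fqa[t]} e_t(\alpha y)\,\chi(y)\tp
\]
In each inner sum I would substitute $y\mapsto\alpha^{-1}y$; this factors out $\chi(\alpha)^{-1}$ and leaves the Gauss sum $\sum_{y\in\Fqa[t]} e_t(y)\chi(y)$ of $\Fq[t]$ attached to $\chi$.

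Next I would identify the characters. The map $\lambda\mapsto\lambda\circ\Norm_t$ is a homomorphism from the character group of $\Fq$ to that of $\Fq[t]$, injective because $\Norm_t$ is surjective. Restricted to $H_n$ its image has order $n$ and consists of characters of order dividing $n$; since the group of all such characters of $\Fq[t]$ also has order $\gcd(n,q^t-1)=n$, the two coincide, so $\{\chi : \ord(\chi)\mid n\}=\{\lambda\circ\Norm_t : \lambda\in H_n\}$. Under this identification the inner Gauss sum attached to $\chi=\lambda\circ\Norm_t$ is precisely $G_t(\lambda)$, while $\chi(\alpha)^{-1}=(\lambda\circ\Norm_t)(\alpha)^{-1}=(\conj\lambda\circ\Norm_t)(\alpha)$. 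The total therefore equals $\sum_{\lambda\in H_n} G_t(\lambda)\,(\conj\lambda\circ\Norm_t)(\alpha)$, and replacing $\lambda$ by $\conj\lambda$ (a bijection of the group $H_n$) rewrites this as the claimed $\sum_{\lambda\in H_n} G_t(\conj\lambda)\,(\lambda\circ\Norm_t)(\alpha)$.

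The one substantive point is the character identification: it uses both the surjectivity of $\Norm_t$ (for injectivity of $\lambda\mapsto\lambda\circ\Norm_t$) and the divisibility $n\mid(q^t-1)$ (to match cardinalities), so that pulling characters of $\Fq$ back along the norm yields exactly the order-$n$ character group of $\Fq[t]$. The remaining manipulations are routine reindexings of the double sum, and I anticipate no further difficulty.
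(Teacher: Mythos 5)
Your argument is correct: the root-counting identity via characters of order dividing $n$, the substitution $y\mapsto\alpha^{-1}y$ to extract the Gauss sum, and the identification of the order-$n$ character group of $\Fq[t]$ with the pullbacks $\lambda\circ\Norm_t$, $\lambda\in H_n$ (using surjectivity of the norm and the cardinality count), together give exactly the stated formula. The paper itself gives no proof of this lemma --- it is quoted as Lemma~4 of \cite{MoiKlo} --- and your proof is the standard argument one would expect to find there, so there is nothing to add.
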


%%
%%% New section
%%

\section{Case $a=0$}

Assume in this section that $a=0$. 
Then $M_t$ in \eqref{eq:Mtdef} has the following expressions.

\begin{lemma} \label{le:Mtbasrep}
Assume that (\ref{eq:restpd}) holds and $a=0$. 
Then
\[
  M_t = (q-1) \sum_{x \in \Fqa[t]} e_t(\gamma_t^{i_0} x^l) = (q-1) \sum_{\lambda \in H_l} G_t(\conj{\lambda}) \lambda(g^{i_0}) \tc
\]
where $l = \gcd (t, \tfrac{s}{d})$ is defined at the beginning of Section \ref{sec:notbasf}.
\end{lemma}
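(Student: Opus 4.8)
The plan is to reduce everything to the Gauss-sum identity of Lemma~\ref{le:monGcon}, applied twice. First, setting $a=0$ annihilates the additive factor in \eqref{eq:Mtdef}, since $e_1(-\tfrac{t}{m}ca) = e_1(0) = 1$; thus $M_t = \sum_{c \in \Fqa}\sum_{x \in \Fqa[t]} e_t(c\gamma_t^{i_0}x^{s/d})$. Because $s/d \mid s \mid (q-1)$, I can invoke Lemma~\ref{le:monGcon} on the inner sum with exponent $n = s/d$ and $\alpha = c\gamma_t^{i_0}$, rewriting it as $\sum_{\lambda \in H_{s/d}} G_t(\conj{\lambda})(\lambda\circ\Norm_t)(c\gamma_t^{i_0})$.

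Next I would exploit that $c$ ranges over $\Fqa \subseteq \Fqa[t]$. For such $c$ the relative norm collapses to a power, $\Norm_t(c) = c^t$, so that $(\lambda\circ\Norm_t)(c) = \lambda^t(c)$, while $(\lambda\circ\Norm_t)(\gamma_t^{i_0}) = \lambda(g^{i_0})$ because $g = \Norm_t(\gamma_t)$. Interchanging the two summations then factors the expression as $M_t = \sum_{\lambda\in H_{s/d}} G_t(\conj{\lambda})\lambda(g^{i_0})\sum_{c\in\Fqa}\lambda^t(c)$.

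The decisive step is the orthogonality evaluation of $\sum_{c\in\Fqa}\lambda^t(c)$, which equals $q-1$ when $\lambda^t$ is trivial and $0$ otherwise. Since $\lambda\in H_{s/d}$ already forces $\ord(\lambda)\mid s/d$, triviality of $\lambda^t$ is equivalent to $\ord(\lambda)\mid\gcd(t,s/d)=l$, i.e. to $\lambda\in H_l$. Hence only the subgroup $H_l \subseteq H_{s/d}$ survives, and $M_t = (q-1)\sum_{\lambda\in H_l}G_t(\conj{\lambda})\lambda(g^{i_0})$, which is exactly the second asserted expression.

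Finally, to recover the first expression I would run Lemma~\ref{le:monGcon} backwards with $n = l$ (legitimate since $l\mid s/d\mid(q-1)$) and $\alpha=\gamma_t^{i_0}$, turning $\sum_{\lambda\in H_l}G_t(\conj{\lambda})\lambda(g^{i_0})$ back into $\sum_{x\in\Fqa[t]}e_t(\gamma_t^{i_0}x^l)$. I expect the main obstacle to be pinpointing which characters survive the sum over $c$: the essential observation is that confining $c$ to the subfield $\Fq$ turns the norm into the $t$-th power map, so the cancellation selects precisely the characters of order dividing $\gcd(t,s/d)$, thereby lowering the exponent from $s/d$ to $l$.
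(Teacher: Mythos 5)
Your proposal is correct and follows essentially the same route as the paper: drop the additive factor since $a=0$, apply Lemma~\ref{le:monGcon} with $n=s/d$, use $\Norm_t(c\gamma_t^{i_0})=c^tg^{i_0}$ to factor out $\sum_{c\in\Fqa}\lambda^t(c)$, identify the surviving characters as $H_l$, and convert back via Lemma~\ref{le:monGcon} with $n=l$. The only cosmetic difference is that the paper first replaces $\lambda^t$ by $\lambda^{\gcd(q-1,t)}$ before invoking orthogonality, whereas you test triviality of $\lambda^t$ directly; both yield $H_{\gcd(q-1,t)}\cap H_{s/d}=H_l$.
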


\begin{proof}
The claimed formulae for $M_t$ are equal by Lemma \ref{le:monGcon} (recall that $\Norm_t (\gamma_t) = g$). 
Substituting $a=0$ into \eqref{eq:Mtdef} we get by Lemma \ref{le:monGcon}
\begin{align}
  M_t &= \sum_{c \in \Fqa} \sum_{x \in \Fqa[t]} e_t (c \gamma_t^{i_0} x^\frac{s}{d}) 
         = \sum_{c \in \Fqa} \sum_{\lambda \in H_{s/d}} G_t(\conj{\lambda}) \lambda(\Norm_t (c \gamma_t^{i_0})) \nonumber\\
      &= \sum_{\lambda \in H_{s/d}} G_t(\conj{\lambda}) \lambda(g^{i_0}) \sum_{c \in \Fqa} \lambda(c^t) \tp \label{eq:Mtintsum}
\end{align}
Setting $n = \gcd(q-1, t)$ we have
\[
  \sum_{c \in \Fqa} \lambda(c^t) = \sum_{c \in \Fqa} \lambda^n (c) =
  \begin{cases}
    0   &\text{if $\lambda^n \ne \lambda_0$,} \\
    q-1 &\text{if $\lambda^n = \lambda_0$.}
  \end{cases}
\]
Since $\lambda \in H_{s/d}$ in \eqref{eq:Mtintsum}, the condition $\lambda^n = \lambda_0$ is equivalent 
to $\lambda \in H_n \cap H_{s/d} = H_l$, and the lemma follows. 
\end{proof}

Furthermore, we present $M_t$ in terms of (a special class of) Jacobi sums
\[
  J_t(\lambda) = \sum_{\substack{x_1\tc\ldots\tc x_t \in \Fq \\ x_1 + \cdots + x_t =1}} \lambda(x_1 \cdots x_t) \tc
\]
where $\lambda$ is a multiplicative character of $\Fq$ and, as usual, we define $\lambda(0) = 0$, if $\lambda \ne \lambda_0$, 
and $\lambda_0 (0) = 1$.

\begin{lemma} \label{le:MtJacobi0}
Assume that (\ref{eq:restpd}) holds and $a=0$. 
Then
\[
  M_t = (q-1) \Bigl( -1 + (-1)^t q \sum_{\lambda \in H_l^\ast} J_t(\lambda) \conj{\lambda} (g^{i_0}) \Bigr) \tc
\]
where $l = \gcd (t, \tfrac{s}{d})$.
\end{lemma}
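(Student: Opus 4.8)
The plan is to start from the Gauss-sum expression for $M_t$ supplied by Lemma \ref{le:Mtbasrep} and convert each Gauss sum into a Jacobi sum. First I would reindex the sum $M_t = (q-1)\sum_{\lambda \in H_l} G_t(\conj{\lambda})\lambda(g^{i_0})$ by replacing $\lambda$ with $\conj{\lambda}$; since $H_l$ is a group this is a bijection of the index set and yields $M_t = (q-1)\sum_{\lambda \in H_l} G_t(\lambda)\conj{\lambda}(g^{i_0})$, which already displays the factor $\conj{\lambda}(g^{i_0})$ occurring in the claim. I would then isolate the trivial character $\lambda_0$: here $G_t(\lambda_0) = \sum_{x \in \Fqa[t]} e_t(x) = -1$ and $\conj{\lambda_0}(g^{i_0}) = 1$, so this term contributes exactly the $-1$ inside the parentheses. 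It therefore remains to show that $G_t(\lambda) = (-1)^t q\, J_t(\lambda)$ for every $\lambda \in H_l^\ast$.

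The bridge between $\Fq[t]$ and the base field $\Fq$ is the Hasse--Davenport lifting relation. Since $e_t = e_1 \circ \Tr_t$ and $G_t(\lambda)$ is the Gauss sum over $\Fq[t]$ built from the lifts $\lambda \circ \Norm_t$ and $e_1 \circ \Tr_t$, that relation reads $-G_t(\lambda) = (-G_1(\lambda))^t$, i.e. $G_t(\lambda) = (-1)^{t+1} G_1(\lambda)^t$, where $G_1(\lambda)$ is the ordinary Gauss sum over $\Fq$. The crucial observation is that $l = \gcd(t, \tfrac{s}{d})$ divides $t$, so every $\lambda \in H_l^\ast$ satisfies $\lambda^t = \lambda_0$; hence I am in the degenerate regime where the familiar identity $J_t(\lambda) = G_1(\lambda)^t / G_1(\lambda^t)$ breaks down, and I expect this to be the main obstacle.

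To get around it I would establish directly that $G_1(\lambda)^t = -q\, J_t(\lambda)$ whenever $\lambda$ is nontrivial and $\lambda^t = \lambda_0$. Expanding $G_1(\lambda)^t = \sum_{x_1, \ldots, x_t \in \Fqa} \lambda(x_1 \cdots x_t)\, e_1(x_1 + \cdots + x_t)$ and grouping terms by the value $u = x_1 + \cdots + x_t$, the substitution $x_i = u y_i$ on the slice $u \ne 0$ produces the factor $\lambda(u^t) = \lambda^t(u) = 1$ and turns the inner sum into $J_t(\lambda)$, independently of $u$; since $\sum_{u \ne 0} e_1(u) = -1$, the $u \ne 0$ part equals $-J_t(\lambda)$. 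For the slice $u = 0$ I would compare with the unconstrained sum $\sum_{x_1, \ldots, x_t} \lambda(x_1 \cdots x_t) = \prod_i \sum_{x_i} \lambda(x_i) = 0$, which forces the $u = 0$ contribution to be $-(q-1)J_t(\lambda)$; adding the two slices gives $G_1(\lambda)^t = -q\, J_t(\lambda)$. Feeding this into the Hasse--Davenport identity yields $G_t(\lambda) = (-1)^{t+1}\bigl(-q J_t(\lambda)\bigr) = (-1)^t q\, J_t(\lambda)$, and substituting back into the reindexed, split-off expression for $M_t$ reproduces the stated formula. The only genuine care required is the bookkeeping of signs in the Hasse--Davenport relation and in $G_t(\lambda_0) = -1$.
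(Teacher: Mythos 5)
Your proof is correct and follows essentially the same route as the paper's: start from Lemma~\ref{le:Mtbasrep}, split off the trivial character (which contributes the $-1$), and for nontrivial $\lambda\in H_l$ apply the Davenport--Hasse identity $G_t(\lambda)=(-1)^{t-1}G_1(\lambda)^t$ together with $G_1(\lambda)^t=-q\,J_t(\lambda)$, valid because $l\mid t$ forces $\lambda^t=\lambda_0$. The only difference is that the paper simply cites Theorem~10.3.1 of \cite{BEW} for this last identity, whereas you prove it directly by slicing $G_1(\lambda)^t$ according to $u=x_1+\cdots+x_t$; that verification is correct (the convention $\lambda(0)=0$ for $\lambda\ne\lambda_0$ makes the restricted and unrestricted Jacobi sums agree), so your argument is a sound, self-contained version of the paper's.
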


\begin{proof}
In Lemma \ref{le:Mtbasrep} $G_t(\conjl{\lambda_0}) \lambda_0 (g^{i_0}) = -1$. 
For $\lambda \ne \lambda_0$, the Davenport-Hasse identity (see e.g.\ \cite[Theorem 5.14]{LidNi}) gives 
$G_t(\lambda) = (-1)^{t-1} G_1(\lambda)^t$ and \cite[Theorem 10.3.1]{BEW} gives $G_1(\lambda)^t = -q J_t(\lambda)$ since $l \mid t$. 
\end{proof}

As we shall see, Lemmas \ref{le:Mtbasrep} and \ref{le:MtJacobi0} give $M_t$ explicitly in many cases.

%%
%%% New section
%%

\section{Case $a \ne 0$}

The result corresponding to Lemmas \ref{le:Mtbasrep} and \ref{le:MtJacobi0} is for $a \ne 0$ the following lemma.

\begin{lemma} \label{le:MtJacobine0}
Assume that (\ref{eq:restpd}) holds and $a \ne 0$. 
Then
\begin{align*}
  M_t &= \sum_{\lambda \in H_{s/d}} G_t(\conj{\lambda}) G_1(\lambda^t) \lambda (a_0^t g^{i_0}) \\
      &= 1 + (-1)^{t-1} q \sum_{\lambda \in H_{s/d}^\ast} J_t(\conj{\lambda}) \lambda ((-a_0)^t g^{i_0}) \tc
\end{align*}
where $a_0 = -\tfrac{m}{ta}$.
\end{lemma}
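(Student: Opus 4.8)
The plan is to start from the general formula for $M_t$ in \eqref{eq:Mtdef} and apply Lemma~\ref{le:monGcon} with $n = \tfrac{s}{d}$ to the inner sum over $x \in \Fqa[t]$. Since $\Norm_t(c\gamma_t^{i_0}) = c^t g^{i_0}$, this rewrites $M_t$ as
\[
  M_t = \sum_{c \in \Fqa} e_1(-\tfrac{t}{m} ca) \sum_{\lambda \in H_{s/d}} G_t(\conj{\lambda}) \lambda(c^t g^{i_0}) \tp
\]
Pulling the $c$-independent factor $\lambda(g^{i_0})$ outside and swapping the order of summation, I would then recognize the remaining inner sum $\sum_{c \in \Fqa} e_1(-\tfrac{t}{m} ca) \lambda(c^t)$ as essentially a Gauss sum over $\Fq$. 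The key manipulation here is the substitution $c \mapsto$ (suitable scaling) so that the linear argument of $e_1$ becomes $c$ itself; writing $a_0 = -\tfrac{m}{ta}$, one absorbs the constant $-\tfrac{t}{m}a = a_0^{-1}$ into the variable and obtains a factor $\lambda(a_0^t)$ together with the sum $\sum_{c} e_1(c)\lambda(c^t) = \sum_c e_1(c) \lambda^t(c) = G_1(\lambda^t)$ (using $\lambda(c^t) = \lambda^t(c)$). This yields the first claimed expression $M_t = \sum_{\lambda \in H_{s/d}} G_t(\conj{\lambda}) G_1(\lambda^t) \lambda(a_0^t g^{i_0})$.

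For the second equality I would convert the product $G_t(\conj{\lambda}) G_1(\lambda^t)$ into a Jacobi sum, paralleling the proof of Lemma~\ref{le:MtJacobi0}. The term $\lambda = \lambda_0$ must be separated first: there $G_t(\conjl{\lambda_0}) = -1$ and $G_1(\lambda_0^t) = -1$, giving a contribution of $(-1)(-1)\lambda_0(\cdots) = 1$, which accounts for the leading $1$ in the stated formula. For $\lambda \ne \lambda_0$ I would apply the Davenport--Hasse identity $G_t(\conj{\lambda}) = (-1)^{t-1} G_1(\conj{\lambda})^t$, and then use the Hasse--Davenport product relation (or the relation $G_1(\conj{\lambda})^t G_1(\lambda^t)$ reduces via \cite[Theorem 10.3.1]{BEW}) to express the product in terms of $J_t(\conj{\lambda})$. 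Combining the sign $(-1)^{t-1}$, the factor $q$ from the Gauss-sum-to-Jacobi-sum conversion, and replacing the argument $a_0^t$ by $(-a_0)^t$ as the character bookkeeping demands, I arrive at $M_t = 1 + (-1)^{t-1} q \sum_{\lambda \in H_{s/d}^\ast} J_t(\conj{\lambda}) \lambda((-a_0)^t g^{i_0})$.

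The main obstacle I anticipate is the careful tracking of the two separate Gauss sums $G_t(\conj{\lambda})$ over $\Fq[t]$ and $G_1(\lambda^t)$ over $\Fq$, and ensuring that the Davenport--Hasse lifting and the Jacobi-sum identity combine with the correct power of $q$ and the correct sign. In particular one must verify that the identity from \cite[Theorem 10.3.1]{BEW} applies to $G_1(\lambda^t)$ even when $\tfrac{s}{d} \nmid t$ (unlike Lemma~\ref{le:MtJacobi0}, here the condition $l \mid t$ is not available for every $\lambda \in H_{s/d}^\ast$), so the argument structure differs subtly and the normalization constant $a_0$ together with the sign change $a_0 \mapsto -a_0$ needs to be reconciled against the definition $J_t(\lambda) = \sum_{x_1 + \cdots + x_t = 1} \lambda(x_1 \cdots x_t)$. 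Confirming that these constants assemble exactly as stated is the delicate bookkeeping step.
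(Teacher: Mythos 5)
Your proposal is correct and follows essentially the same route as the paper: substitute $c \mapsto a_0 c$ to normalize the additive argument, apply Lemma~\ref{le:monGcon}, recognize the inner sum as $G_1(\lambda^t)$, peel off the trivial character (which contributes the leading $1$), and convert via Davenport--Hasse and \cite[Theorem 10.3.1]{BEW}. The subtlety you flag at the end is resolved in the paper exactly as you anticipate: one splits $H_{s/d}^\ast$ into $H_l^\ast$ (where $\lambda^t=\lambda_0$, so $G_1(\lambda^t)=-1$, $G_1(\conj{\lambda})^t=-qJ_t(\conj{\lambda})$ and $\lambda((-1)^t)=1$) and its complement (where $G_1(\conj{\lambda})^t=\lambda((-1)^t)J_t(\conj{\lambda})\conjl{G_1(\lambda^t)}$ and $\conjl{G_1(\lambda^t)}\,G_1(\lambda^t)=q$), and both cases collapse to the same term $qJ_t(\conj{\lambda})\lambda((-a_0)^tg^{i_0})$.
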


\begin{proof}
Substituting $c \mapsto -\tfrac{mc}{ta} = a_0 c$ into \eqref{eq:Mtdef} we get by Lemma \ref{le:monGcon} and the Davenport-Hasse identity
\begin{align*}
  M_t &= \sum_{c \in \Fqa} e_1 (c) \sum_{\lambda \in H_{s/d}} G_t(\conj{\lambda}) \lambda(\Norm_t(a_0 c \gamma_t^{i_0})) \\
      &= \sum_{c \in \Fqa} e_1 (c) \sum_{\lambda \in H_{s/d}} G_t(\conj{\lambda}) \lambda(a_0^t c^t g^{i_0})             \\
      &= \sum_{\lambda \in H_{s/d}} G_t(\conj{\lambda}) \lambda(a_0^t g^{i_0}) \sum_{c \in \Fqa} e_1 (c) \lambda^t (c)   \\
      &= (-1)^{t-1} \sum_{\lambda \in H_{s/d}} G_1(\conj{\lambda})^t G_1(\lambda^t) \lambda(a_0^t g^{i_0})               \\
      &= 1 + (-1)^{t-1} \sum_{\lambda \in H_{s/d}^\ast} G_1(\conj{\lambda})^t G_1(\lambda^t) \lambda(a_0^t g^{i_0}) \tp 
\end{align*}
Thus
\begin{align}
\begin{split} \label{eq:MtGsplit}
  (-1)^{t-1} (M_t - 1) &= \sum_{\lambda \in H_l^\ast} G_1(\conj{\lambda})^t G_1(\lambda^t) \lambda(a_0^t g^{i_0}) \\
  &\quad+ \sum_{\lambda \in H_{s/d}^\ast \setminus H_l^\ast} G_1(\conj{\lambda})^t G_1(\lambda^t) \lambda(a_0^t g^{i_0}) \tp
\end{split}
\end{align}
By Theorems 10.3.1 and 1.1.4~(b) of \cite{BEW}, 
\[
  G_1(\conj{\lambda})^t =
  \begin{cases}
    -q J_t(\conj{\lambda}) &\text{if $\lambda^t = \lambda_0$,} \\
    \lambda((-1)^t) J_t(\conj{\lambda}) \conjl{G_1(\lambda^t)} &\text{if $\lambda^t \ne \lambda_0$.}
  \end{cases}
\]
For $\lambda \in H_l \subseteq H_t$, $G_1(\lambda^t) = -1$ and $\lambda((-1)^t) = 1$. 
Hence in \eqref{eq:MtGsplit}
\begin{align} \label{eq:inHlpartJ}
  \sum_{\lambda \in H_l^\ast} G_1(\conj{\lambda})^t G_1(\lambda^t) \lambda(a_0^t g^{i_0}) 
    = q \sum_{\lambda \in H_l^\ast} J_t(\conj{\lambda}) \lambda((-a_0)^t g^{i_0}) \tp
\end{align}
For $\lambda \in H_{s/d}^\ast \setminus H_l^\ast$, $\conjl{G_1(\lambda^t)} G_1(\lambda^t) = \abs{G_1(\lambda^t)}^2 = q$ and in \eqref{eq:MtGsplit}
\begin{align*}
  \sum_{\lambda \in H_{s/d}^\ast \setminus H_l^\ast} &G_1(\conj{\lambda})^t G_1(\lambda^t) \lambda(a_0^t g^{i_0}) 
    = q \sum_{\lambda \in H_{s/d}^\ast \setminus H_l^\ast} J_t(\conj{\lambda}) \lambda((-a_0)^t g^{i_0}) \\
    &= q \Bigl( \sum_{\lambda \in H_{s/d}^\ast} J_t(\conj{\lambda}) \lambda((-a_0)^t g^{i_0}) 
      - \sum_{\lambda \in H_l^\ast} J_t(\conj{\lambda}) \lambda((-a_0)^t g^{i_0}) \Bigr) \tp
\end{align*}
Combining this with \eqref{eq:MtGsplit} and \eqref{eq:inHlpartJ} we obtain the lemma.
\end{proof}

In some cases we are able to compute monomial sums $\sum_{x \in \Fqa[t]} e_t(\alpha x^n)$ explicitly. 
In such cases Lemma \ref{le:Mtbasrep} is useful for $a=0$. 
The following lemma gives similar formula for $a \ne 0$.

\begin{lemma} \label{le:Mtmonsum}
Assume that (\ref{eq:restpd}) holds and $a \ne 0$. 
Then
\[
  M_t = \frac{1}{u} \sum_{j=0}^{u-1} \Bigl( \sum_{x \in \Fqa[t]} e_t \bigl( a_0 \gamma_t^{t_0 j + i_0} x^\frac{s}{d} \bigr) \Bigr) 
    \Bigl( \sum_{c \in \Fqa} e_1(g^j c^u) \Bigr) \tc
\]
where $a_0 = -\tfrac{m}{ta}$, $t_0 = \tfrac{q^t-1}{q-1}$, and $u = \tfrac{s}{dl}$ 
with $l = \gcd(t, \tfrac{s}{d}) = \gcd(t_0, \tfrac{s}{d})$.
\end{lemma}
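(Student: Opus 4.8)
The plan is to start from the representation of $M_t$ derived in Lemma \ref{le:MtJacobine0}, namely
\[
  M_t = \sum_{\lambda \in H_{s/d}} G_t(\conj{\lambda}) G_1(\lambda^t) \lambda(a_0^t g^{i_0}) \tc
\]
and to reorganize the sum over the character group $H_{s/d}$ according to the values of $\lambda^t$. The key observation is that the map $\lambda \mapsto \lambda^t$ sends $H_{s/d}$ onto $H_l$, where $l = \gcd(t, \tfrac{s}{d})$, and this map is $u$-to-one with $u = \tfrac{s}{dl}$. I would first verify that $\gcd(t, \tfrac{s}{d}) = \gcd(t_0, \tfrac{s}{d})$ with $t_0 = \tfrac{q^t-1}{q-1}$, which holds because $t_0 \equiv t \pmod{q-1}$ and $\tfrac{s}{d} \mid (q-1)$.

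Next I would go backwards, rewriting each of the two monomial sums that appear in the claimed formula using Lemma \ref{le:monGcon}. The inner sum $\sum_{x \in \Fqa[t]} e_t(a_0 \gamma_t^{t_0 j + i_0} x^{s/d})$ expands, by Lemma \ref{le:monGcon} with $n = \tfrac{s}{d}$ and $\alpha = a_0 \gamma_t^{t_0 j + i_0}$, into $\sum_{\lambda \in H_{s/d}} G_t(\conj{\lambda})\, \lambda(\Norm_t(a_0 \gamma_t^{t_0 j + i_0}))$. Here $\Norm_t(\gamma_t^{t_0}) = g^{t_0}$ and one checks $\Norm_t(a_0 \gamma_t^{t_0 j + i_0}) = a_0^t g^{t_0 j + i_0}$, noting that $a_0 \in \Fqa$ so $\Norm_t(a_0) = a_0^t$. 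The second factor $\sum_{c \in \Fqa} e_1(g^j c^u)$ expands by Lemma \ref{le:monGcon} over $\Fq$ (the case $t=1$) with $n = u$ into $\sum_{\psi \in H_u} G_1(\conj{\psi})\, \psi(g^j)$.

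I would then substitute both expansions into the right-hand side of the claim and interchange the order of summation, performing the sum over $j$ from $0$ to $u-1$ first. After collecting the $g^j$ terms, the $j$-sum becomes $\sum_{j=0}^{u-1} \lambda(g^{t_0})^j \psi(g)^j$, a geometric sum over a group of order $u$, which by orthogonality equals $u$ precisely when $\lambda(g^{t_0})\psi(g) = 1$ and vanishes otherwise. The condition $\lambda(g^{t_0}) = \conj{\psi}(g)$ is exactly the statement that the restriction of $\lambda$ to the relevant subgroup, equivalently $\lambda^{t_0}$ (hence $\lambda^t$), is pinned down by $\psi$; this pairs each $\lambda \in H_{s/d}$ with the unique $\psi \in H_u$ for which the terms survive. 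The factor $\tfrac{1}{u}$ in the claim cancels the $u$ produced by the geometric sum.

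The main obstacle, and the step I would treat most carefully, is the bookkeeping that identifies $G_1(\conj{\psi})\, \psi(g^j)$-type contributions with the factor $G_1(\lambda^t)$ in Lemma \ref{le:MtJacobine0}. Concretely, after the orthogonality reduction one must confirm that the surviving character $\psi$ satisfies $\psi = \lambda^t$ (as characters of $\Fq$, using $\lambda \circ \Norm_t$ restricted appropriately and the relation between $t$ and $t_0$ modulo $q-1$), so that $G_1(\conj{\psi}) = G_1(\conjl{\lambda^t})$ and the phase $\lambda(a_0^t g^{i_0})$ is recovered. I would verify this matching by comparing exponents of $g$, since every character here is determined by its value on the primitive element $g$, and then conclude that the reorganized expression is termwise equal to the formula for $M_t$ in Lemma \ref{le:MtJacobine0}, which proves the lemma.
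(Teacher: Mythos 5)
Your strategy is sound and genuinely different from the paper's. The paper proves the lemma without any characters at all: after the substitution $c \mapsto a_0 c$ it writes $c = \gamma_t^{t_0 i}$, splits $\gen{g} = \bigcup_{j=0}^{u-1} g^j \gen{g^u}$, and observes that the inner monomial sum is constant on each coset because $l \mid t_0$ makes $\gamma_t^{t_0 u k} = (\gamma_t^{k t_0/l})^{s/d}$ an $(s/d)$-th power; the factor $\tfrac{1}{u}$ then appears when the sum over a coset of $\gen{g^u}$ is rewritten as $\tfrac{1}{u} \sum_{c \in \Fqa} e_1(g^j c^u)$. Your route instead expands both factors of the claimed right-hand side into Gauss sums by Lemma \ref{le:monGcon} and matches the result against the first display of Lemma \ref{le:MtJacobine0}; this is logically legitimate (Lemma \ref{le:MtJacobine0} is proved independently from \eqref{eq:Mtdef}), and the orthogonality-in-$j$ computation does close the argument. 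What your approach buys is a conceptual explanation of why the two formulas for $M_t$ agree; what it costs is reliance on the character machinery where the paper's argument is a bare rearrangement of the defining sum.

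Two bookkeeping slips in your sketch must be repaired, both located in the step you yourself flag as delicate. First, the map $\lambda \mapsto \lambda^t$ on $H_{s/d}$ has kernel $H_l$ and image $H_u$, so it is $l$-to-one onto $H_u$, not $u$-to-one onto $H_l$ (consistently with the partner character $\psi$ living in $H_u$). Second, the surviving $\psi$ in the orthogonality condition $\lambda(g)^{t_0}\psi(g) = 1$ is $\psi = \conjl{\lambda^{t_0}} = \conjl{\lambda^{t}}$, not $\lambda^t$; it is precisely this that yields $G_1(\conj{\psi}) = G_1(\lambda^t)$ and hence termwise agreement with $\sum_{\lambda \in H_{s/d}} G_t(\conj{\lambda}) G_1(\lambda^t) \lambda(a_0^t g^{i_0})$. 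Taken literally, your identification $\psi = \lambda^t$ would leave $G_1(\conjl{\lambda^t})$ in the final formula, which does not match Lemma \ref{le:MtJacobine0}. You should also record explicitly that $\lambda(g)^{t_0}\psi(g)$ is a $u$-th root of unity --- this needs $(\tfrac{s}{d}) \mid t_0 u$, i.e.\ $l \mid t_0$, which is where $l = \gcd(t_0, \tfrac{s}{d})$ enters --- so that the geometric sum over $j$ really is an orthogonality relation rather than a partial geometric series.
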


\begin{proof}
First we observe that $t_0 = (q-1)(q^{t-2} + 2q^{t-3} + \cdots + (t-2)q + t - 1) + t$ and therefore $l = \gcd(t_0, \frac{s}{d})$. 

Substituting $c \mapsto a_0 c$ and noting that $g = \gamma_t^{t_0}$, \eqref{eq:Mtdef} transforms into
\[
  M_t = \sum_{c \in \Fqa} e_1(c)\sum_{x \in \Fqa[t]} e_t (a_0 c \gamma_t^{i_0} x^{\frac{s}{d}})
      = \sum_{i=0}^{q-2} e_1(\gamma_t^{t_0 i}) \sum_{x \in \Fqa[t]} e_t (a_0 \gamma_t^{t_0 i+i_0} x^{\frac{s}{d}}) \tp
\]  
By the partitition $\gen{\gamma_t^{t_0}} = \bigcup_{j=0}^{u-1} \gamma_t^{t_0j} \gen{\gamma_t^{t_0 u}}$ each element 
in $\gen{\gamma_t^{t_0}}$ can be written in the form $\gamma_t^{t_0 j} \gamma_t^{t_0 uk}$ with $j \in \{ 0, \dots, u-1 \}$ 
and $k \in \{ 0, \dots, \frac{q-1}{u} - 1 \}$. 
Thus,
\begin{align*}
  \sum_{x \in \Fqa[t]} e_t(a_0 \gamma_t^{t_0 i + i_0} x^{\frac{s}{d}})
    &= \sum_{x \in \Fqa[t]} e_t(a_0 \gamma_t^{t_0 j + i_0} (\gamma_t^{k t_0/l} x)^{\frac{s}{d}}) \\
    &= \sum_{x \in \Fqa[t]} e_t(a_0 \gamma_t^{t_0 j + i_0} x^{\frac sd}) \tc
\end{align*}
and consequently
\[
  M_t = \sum_{j=0}^{u-1} \sum_{x \in \Fqa[t]} e_t(a_0 \gamma_t^{t_0 j + i_0} x^{\frac{s}{d}})
    \sum_{k=0}^{\frac{q-1}{u}-1} e_1(\gamma_t^{t_0 j} \gamma_t^{t_0 uk}) \tp
\]
Here the inner sum equals
\[
  \frac{1}{u} \sum_{k=0}^{q-2} e_1(g^j g^{ku}) = \frac{1}{u} \sum_{c \in \Fqa} e_1(g^j c^u) \tc
\]
and the proof is complete.  
\end{proof}

%%
%%% New section
%%

\section{Number of polynomials in certain special cases}

In this section we consider some special cases when $M_t$, and hence $P_m(a, s, h)$, can be given expicitly. 
One case is that $s$ is small. 
Then $\lambda$ in the summations of Lemmas \ref{le:MtJacobi0} and \ref{le:MtJacobine0} has small order. 
The Jacobi sums for characters of several small orders have been computed explicitly in \cite{BEW}.

Another classes when $M_t$ can be computed explicitly (or up to two choices) are the semiprimitive and index $2$ cases for $p=2$ 
(see Subsection \ref{sbs:sprind2}, p.~\pageref{sbs:sprind2}, for definitions). 
In these cases the monomial sums, or at least their value distribution, in Lemmas \ref{le:Mtbasrep} and \ref{le:Mtmonsum} can be evaluated.

We shall consider several small $s$ and semiprimitive and index $2$ cases in the following subsections.

%%
%%% New subsection
%%

\subsection{Case $s=2$ (Carlitz's case)}

The case $s=2$ was studied already by Carlitz in \cite{Carl}. 
Now $b \in g^h \gen{g^2}$ and $h = 0$ or $1$ according to whether $b$ is a square or a non-square in $\Fqa$. 
In addition, $p$ must be odd since $2 \mid (q-1)$. 
We have now three possibilities for $d$ and $l$:
\[
  (d, l) =
  \begin{cases}
    (1, 1) &\text{if $2 \nmid \tfrac{m}{t}$, $2 \nmid t$,} \\
    (1, 2) &\text{if $2 \nmid \tfrac{m}{t}$, $2 \mid t$,}  \\
    (2, 1) &\text{if $2 \mid \tfrac{m}{t}$.}
  \end{cases}
\]
Let us now compute $M_t$ and $N_t$ assuming \eqref{eq:restpd}. 
For other cases, $N_t$ can be computed with Lemma \ref{le:Ntspecial}. 
After computing $M_t$ the $N_t$ is obtained from Lemma \ref{le:NtsumMt}, see Theorem \ref{th:Nts=2} below.

If $(d, l) = (1, 1)$ then $i_0 = h$ in \eqref{eq:i0def}. 
For $a=0$ we have by Lemma \ref{le:Mtbasrep}
\[
  M_t = (q-1) \sum_{x \in \Fqa[t]} e_t(\gamma_t^h x) = 1-q \tp
\]
For $a \ne 0$, let $\rho$ be the multiplicative character of order $2$ of $\Fq$. 
Then $\conj{\rho} = \rho$ and $\rho(g^h) = (-1)^h$. 
Further,
\begin{align} \label{eq:Jtrho2}
  J_t(\rho) =
  \begin{cases}
    -\rho((-1)^\frac{t}{2}) q^\frac{t-2}{2}  &\text{if $t$ is even,} \\
    \rho((-1)^\frac{t-1}{2}) q^\frac{t-1}{2} &\text{if $t$ is odd,}  \\
  \end{cases}
\end{align}
by \cite[Theorem 10.2.2]{BEW}. 
As now $t$ is odd, Lemma \ref{le:MtJacobine0} and \eqref{eq:Jtrho2} give
\begin{align*}
  M_t &= 1 + q J_t(\rho) \rho((-a_0)^t g^h) 
      = 1 + q \rho((-1)^\frac{t-1}{2}) q^\frac{t-1}{2} \rho ((\tfrac{m}{ta})^t g^h) \\
      &= 1 + (-1)^h q^\frac{t+1}{2} \rho \bigl( (-1)^\frac{t-1}{2} \tfrac{m}{ta} \bigr) \tp
\end{align*}

If $(d, l) = (1, 2)$ then again $i_0 = h$ in \eqref{eq:i0def}. 
Now $t$ is even, so Lemma \ref{le:MtJacobi0} and \eqref{eq:Jtrho2} give for $a=0$
\begin{align*}
  M_t &= (q-1) (-1 + q J_t(\rho) \rho(g^h)) \\
      &= (q-1) \bigl( -1 - q \rho((-1)^\frac{t}{2}) q^\frac{t-2}{2} (-1)^h \bigr) \\
      &= (q-1) \bigl( -1 - (-1)^h q^\frac{t}{2} \rho((-1)^\frac{t}{2}) \bigr) \tp
\end{align*}

For $a \ne 0$ we get by Lemma \ref{le:MtJacobine0} and \eqref{eq:Jtrho2}
\begin{align*}
  M_t &= 1 - q J_t(\rho) \rho((-a_0)^t g^h) = 1 + q \rho((-1)^\frac{t}{2}) q^\frac{t-2}{2} (-1)^h \\
      &= 1 + (-1)^h \rho((-1)^\frac{t}{2}) q^\frac{t}{2} \tp
\end{align*}

If $(d, l) = (2, 1)$ then \eqref{eq:restpd} can hold only if $h=0$ ($N_t = 0$ for $h=1$ by Lemma \ref{le:Ntspecial}). 
By Lemmas \ref{le:Mtbasrep} and \ref{le:MtJacobine0}, $M_t = 1-q$ for $a=0$ and $M_t = 1$ for $a \ne 0$. 
Lemma \ref{le:NtsumMt} now gives the following theorem. 

\begin{theo} \label{th:Nts=2}
The values of $N_t$ for $s=2$ and assuming (\ref{eq:restpd}) are those listed in Table \ref{tbl:Nts2}.
\end{theo}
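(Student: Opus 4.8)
The plan is to read off each entry of Table~\ref{tbl:Nts2} directly from Lemma~\ref{le:NtsumMt}, which under assumption~\eqref{eq:restpd} reduces the computation of $N_t$ to that of $M_t$ through
\[
  N_t = \frac{d}{2q}(q^t - 1 + M_t)
\]
(here $s=2$). All the values of $M_t$ needed for this have already been obtained above, via Lemmas~\ref{le:Mtbasrep}, \ref{le:MtJacobi0}, \ref{le:MtJacobine0} and the Jacobi-sum evaluation~\eqref{eq:Jtrho2}, in the three possibilities $(d,l) \in \{(1,1),(1,2),(2,1)\}$ and, within each, for $a=0$ and $a \ne 0$ separately. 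It therefore remains to substitute these six expressions into the displayed formula and simplify.

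Concretely, I would proceed case by case. In the case $(d,l)=(1,1)$ the value $M_t = 1-q$ for $a=0$ gives $N_t = \tfrac12(q^{t-1}-1)$, while for $a \ne 0$ the expression $M_t = 1 + (-1)^h q^{(t+1)/2}\rho\bigl((-1)^{(t-1)/2}\tfrac{m}{ta}\bigr)$ makes the constant terms cancel and yields a main term $\tfrac12 q^{t-1}$ together with a character-valued correction of order $q^{(t-1)/2}$. The case $(d,l)=(1,2)$ (with $t$ even) is handled identically, now using the two $M_t$ computed there; note that here $d=1$, so the prefactor is again $\tfrac1{2q}$, and the factor $q-1$ appearing in the $a=0$ expression for $M_t$ survives into the correction term. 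Finally, in the case $(d,l)=(2,1)$ assumption~\eqref{eq:restpd} forces $h=0$, and the clean values $M_t=1-q$ ($a=0$) and $M_t=1$ ($a \ne 0$) combine with the prefactor $\tfrac{2}{2q}=\tfrac1q$ to give $N_t = q^{t-1}-1$ and $N_t = q^{t-1}$ respectively.

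Since each of the six substitutions is an immediate simplification of Lemma~\ref{le:NtsumMt}, there is no genuine conceptual obstacle; the entire content of the theorem is the organization and transcription of the preceding $M_t$ computations into tabular form. The step requiring the most care is the bookkeeping of the quadratic character $\rho$ and of the sign $(-1)^h$: one must ensure that the correct value of $i_0$ is used in each regime (namely $i_0=h$ when $d=1$, and $i_0=0$ when $d=2$, $h=0$), and that the evaluation $\rho(g^h)=(-1)^h$ together with the parity factors $(-1)^{(t-1)/2}$ and $(-1)^{t/2}$ coming from~\eqref{eq:Jtrho2} are carried through consistently, so that they land in the correct rows of the table. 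Entries lying outside~\eqref{eq:restpd} are not part of this statement and are supplied separately by Lemma~\ref{le:Ntspecial}.
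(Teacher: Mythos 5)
Your proposal is correct and follows exactly the paper's own route: the paper computes $M_t$ in the three regimes $(d,l)\in\{(1,1),(1,2),(2,1)\}$ for $a=0$ and $a\ne 0$ via Lemmas~\ref{le:Mtbasrep}, \ref{le:MtJacobi0}, \ref{le:MtJacobine0} and \eqref{eq:Jtrho2}, and then states that Lemma~\ref{le:NtsumMt} yields the table. Your six substitutions into $N_t = \tfrac{d}{2q}(q^t-1+M_t)$ all check out against Table~\ref{tbl:Nts2}, including the bookkeeping of $i_0=h$ versus $i_0=0$ and the sign $(-1)^h=\rho(g^h)$.
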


\begin{table}[!htb]
\centering
\caption{Values of $N_t$ for $s=2$ assuming \eqref{eq:restpd}.}
\label{tbl:Nts2}
\begin{tabular}{ccc}
\hline
 $a$  &                                             $N_t$                                              & $(d, l)$ \\
\hline
$a=0$ &                         \rule{0pt}{2.5ex}$\tfrac{1}{2} (q^{t-1} - 1)$                          & $(1, 1)$ \\
      & $\tfrac{1}{2} \bigl( q^{t-1} - 1 - (q-1) (-1)^h q^\frac{t-2}{2} \rho((-1)^\frac{t}{2}) \bigr)$ & $(1, 2)$ \\
      & $q^{t-1} - 1$ & $(2, 1)$ \\
\hline
$a \ne 0$ & \rule{0pt}{2.5ex}$\tfrac{1}{2} \bigl( q^{t-1} + (-1)^h q^\frac{t-1}{2} \rho \bigl( (-1)^\frac{t-1}{2} \tfrac{m}{ta} \bigr) \bigr)$ 
            & $(1, 1)$ \\
          & $\tfrac{1}{2} \bigl( q^{t-1} + (-1)^h q^\frac{t-2}{2} \rho((-1)^\frac{t}{2}) \bigr)$ & $(1, 2)$ \\
          &                                      $q^{t-1}$                                       & $(2, 1)$ \\
\hline
\end{tabular}
\end{table}

Equation \eqref{eq:PmsumNt} now gives $P_m(a, 2, h)$ explicitly when the structure of the factorization of $m$ is known. 
In particular, if $m>2$ is prime then $P_m(a, 2, h) = \tfrac{1}{m} (N_m - N_1)$, and $d = l = 1$ for both $t=1$, $m$. 
First, from Table \ref{tbl:Nts2}, $N_m = \tfrac{1}{2} (q^{m-1} - 1)$ for $a=0$ and 
\[
  N_m = \tfrac{1}{2} \bigl( q^{m-1} + (-1)^h q^\frac{m-1}{2} \rho((-1)^\frac{m-1}{2} a) \bigr)
\]
for $a \ne 0$. 
If $m=p$ then by Lemma \ref{le:Ntspecial} $N_1 = \tfrac{q-1}{2}$ for $a=0$ and $N_1 = 0$ for $a \ne 0$. 
If $m \ne p$ then Table \ref{tbl:Nts2} yields $N_1 = 0$ for $a=0$ and $N_1 = \tfrac{1}{2} \bigl( 1 + (-1)^h \rho(ma) \bigr)$ for $a \ne 0$. 
Combining these we obtain
\[
  P_m(0, 2, h) =
  \begin{cases}
    \tfrac{1}{2p} (q^{p-1} - q) &\text{if $m=p$,}   \\
    \tfrac{1}{2m} (q^{m-1} - 1) &\text{if $m \ne p$,}
  \end{cases}
\]
and, for $a \ne 0$,
\[
  P_m(a, 2, h) =
  \begin{cases}
    \tfrac{1}{2p} (q^{p-1} + S) &\text{if $m=p$.} \\
    \tfrac{1}{2m} (q^{m-1} + S - (-1)^h \rho(ma) - 1) &\text{if $m \ne p$,}
  \end{cases}
\]
where $S= (-1)^h q^\frac{m-1}{2} \rho((-1)^\frac{m-1}{2} a)$. 
These results are in accordance with \cite[eqs.\ (5.8) and (5.9)]{Carl}.

%%
%%% New subsection
%%

\subsection{Case $s = 4 = 2^2$}

For $s=4$ we assume that $q=p$, i.e.\ $r=1$. 
Then \cite[Theorem 10.2.5]{BEW} applies directly. 
The more general $q$ will be considered in a future work. 
Since $4 \mid (q-1)$, $p = 4f+1$ for some $f \in \Z$. 
This time we have six possibilities for $d$ and $l$:
\[
  (d,l) = 
  \begin{cases}
    (1,1) &\text{if $\tfrac{m}{t}$ and $t$ are odd,}                       \\
    (1,2) &\text{if $\tfrac{m}{t}$ odd and $t \equiv 2 \; (\bmod \, 4)$,}  \\
    (1,4) &\text{if $\tfrac{m}{t}$ odd and $4 \mid t$,}                    \\
    (2,1) &\text{if $\tfrac{m}{t} \equiv 2 \; (\bmod \, 4)$ and $t$ odd,}  \\
    (2,2) &\text{if $\tfrac{m}{t} \equiv 2 \; (\bmod \, 4)$ and $t$ even,} \\
    (4,1) &\text{if $4 \mid \tfrac{m}{t}$.}                                \\
  \end{cases}
\]
Let $\cn$ be the multiplicative character of order $4$ of $\Fq = \F_{\!p}$ satisfying $\cn(g) = \imu$. 
Furthermore, let $a_4$ and $b_4$ be integers satisfying (see Theorems 3.2.1 and 3.2.2 in \cite{BEW})
\[
  a_4^2 + b_4^2 = p \tc \quad a_4 \equiv -\tLeg{2}{p} \; (\bmod \, 4) \tc \quad b_4 \equiv a_4 g^\frac{p-1}{4} \; (\bmod \, p) \tc
\]
where $\tLeg{2}{p}$ denotes the Legendre symbol. 
Set
\begin{align}\label{eq:pi4def}
  \pn = (-1)^f (a_4 + \imu b_4) \in \Z[\imu]\tp
\end{align}
Then $\pn \cpn = p$ and, since $q=p$,
\begin{align}\label{eq:Jtchi4}
  J_t(\cn) = 
  \begin{cases}
    -p^{\frac{t-4}{4}} \pn^{\frac{t}{2}}         &\text{if $t\equiv 0 \pmod 4$,} \\
    p^{\frac{t-1}{4}} \pn^{\frac{t-1}{2}}        &\text{if $t\equiv 1 \pmod 4$,} \\
    p^{\frac{t-2}{4}} \pn^{\frac{t}{2}}          &\text{if $t\equiv 2 \pmod 4$,} \\
    (-1)^f p^{\frac{t-3}{4}} \pn^{\frac{t+1}{2}} &\text{if $t\equiv 3 \pmod 4$}
  \end{cases}
\end{align}
by \cite[Therem 10.2.5]{BEW}. 
Note also that $\cn^2 = \rho$ (see $s=2$), $\cn^3 = \ccn$, and consequently $J_t(\cn^3) = \conjl{J_t(\cn)}$. 
Further, $\rho(-1) = \cn^2 (-1) = 1$ and $q=p$, so \eqref{eq:Jtrho2} simplifies into
\begin{align} \label{eq:Jtrho4}
  J_t (\cn^2) = J_t (\rho) =
  \begin{cases}
    -p^\frac{t-2}{2} &\text{if $t$ is even,} \\
    p^\frac{t-1}{2}  &\text{if $t$ is odd.}  \\
  \end{cases}
\end{align}

Let us now assume \eqref{eq:restpd} and compute the numbers $M_t$ and $N_t$. 
As in the previous subsection, $N_t$ is obtained in the other cases from Lemma \ref{le:Ntspecial}.
We use the above results on Jacobi sums, and Lemmas \ref{le:MtJacobi0} and \ref{le:MtJacobine0} in the cases $a=0$ and $a \ne 0$, respectively. 
Let first $a=0$. 
If $l=1$, Lemma \ref{le:MtJacobi0} gives $M_t = 1-p$. 
In the case $l=2$ we have 
\[
  M_t = (p-1) \bigl( -1 +(-1)^t p J_t(\rho) \conj{\rho}(g^{i_0}) \bigr)
\] 
by Lemma \ref{le:MtJacobi0}. 
Here $\conj{\rho}(g^{i_0}) = \rho(g^{i_0}) = (-1)^{i_0}$. 
As now $t$ is even, \eqref{eq:Jtrho4} gives $M_t$. 
Finally, if $l=4$, 
\begin{align*}
 \frac{M_t}{p-1} &= -1 + (-1)^t p \sum_{\lambda\in H_4^\ast} J_t(\lambda)\conj{\lambda}(g^{i_0})                                        \\
                 &= -1 + (-1)^t p \bigl( J_t( \cn ) \cn^3( g^{i_0}) + J_t(\rho) \conj{\rho}(g^{i_0}) + J_t( \cn^3 ) \cn( g^{i_0} ) \bigr) \\
                 &= -1 + (-1)^t p \bigl( (-1)^{i_0} J_t( \rho ) + 2\Real( J_t( \cn ) \imu^{ 3 i_0 } ) \bigr)\tp
\end{align*}
The formula for $M_t$ is obtained from this by using \eqref{eq:Jtchi4} and \eqref{eq:Jtrho4} and by remembering that 
$4 \mid t$ in the case $l=4$. 

Let us next consider the case $a \ne 0$. 
If $\tfrac{s}{d}=1$, $M_t = 1$ by Lemma \ref{le:MtJacobine0}. 
This corresponds to $(d,l)=(4,1)$. 
If $\tfrac{s}{d} = 2$ then $(d,l)=(2,1)$ and $t$ is odd or $(d,l)=(2,2)$ and $t$ is even. 
Again by Lemma \ref{le:MtJacobine0} 
\[
  M_t = 1 +(-1)^{t-1} p J_t( \rho ) \rho( (-a_0)^t g^{i_0} )\tp
\]
Here $\rho( (-a_0)^t g^{i_0} )=(-1)^{i_0}\rho( (-1)^t) \rho(a_0^t) = (-1)^{i_0}\rho(a_0)$ when $t$ is odd 
and $\rho( (-a_0)^t g^{i_0} )=(-1)^{i_0}$ when $t$ is even. 
The equation \eqref{eq:Jtrho4} now gives $M_t$. 
If $\tfrac{s}{d}=4$ we have three possibilities for $(d,l)$. 
In the cases $(d,l) = (1,2)$, $(1,4)$ we know $t$ modulo $4$ but in the case $(d,l)=(1,1)$ there are two possibilities: 
$t \equiv 1 \pmod 4$ or $t \equiv 3 \pmod 4$. 
Lemma  \ref{le:MtJacobine0} now gives 
\[
  M_t = 1 + (-1)^{t-1} p \bigl( 2 \Real( J_t( \cn ) {\cn}( (-a_0)^{3t} ) \imu^{3 i_0} ) + (-1)^{i_0} J_t(\rho) \rho(a_0^t) \bigr)\tp 
\]
Again numbers $M_t$ can be obtained from this by using the knowledge on $t$ modulo $4$ and the equations \eqref{eq:Jtchi4} and \eqref{eq:Jtrho4}.

We summarize these results in the following theorem.

\begin{theo} \label{th:Nts=4}
Assume $q=p$ and (\ref{eq:restpd}), and let
\[
  Q_{t,4} = 
  \begin{cases}
    p^\frac{t-1}{4} \pn^\frac{t-1}{2} \ccn(-a_0) \imu^{i_0}        &\text{if $t \equiv 1 \pmod 4$,} \\
    (-1)^f p^\frac{t-3}{4} \pn^\frac{t+1}{2} \cn(-a_0) \imu^{i_0}  &\text{if $t \equiv 3 \pmod 4$.}
  \end{cases}
\]
In addition, let $\pn$ be as in (\ref{eq:pi4def}), $i_0$ as in (\ref{eq:i0def}) and let $a_0 = -\tfrac{m}{ta}$. 
Then the values of $N_t$ for $s=4$ are those listed in Table \ref{tbl:Nts4}.
\end{theo}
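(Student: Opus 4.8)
The plan is to reduce everything to the evaluation of $M_t$ and then invoke Lemma~\ref{le:NtsumMt}, which under \eqref{eq:restpd} gives $N_t=\frac{d}{sq}(q^t-1+M_t)=\frac{d}{4p}(p^t-1+M_t)$ since here $s=4$ and $q=p$. Thus I would organise the proof as a walk through the six possibilities for $(d,l)$ listed just before the statement, computing $M_t$ in each and substituting into this formula. The split $a=0$ versus $a\ne 0$ is governed by Lemmas~\ref{le:MtJacobi0} and~\ref{le:MtJacobine0} respectively, and throughout the only inputs needed are the explicit Jacobi sums \eqref{eq:Jtchi4} and \eqref{eq:Jtrho4}, together with $\cn(g)=\imu$, $\rho(g)=-1$, and the relations $\cn^2=\rho$, $\cn^3=\ccn$, $J_t(\cn^3)=\conjl{J_t(\cn)}$.

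For $a=0$, Lemma~\ref{le:MtJacobi0} reads $M_t=(q-1)\bigl(-1+(-1)^t q\sum_{\lambda\in H_l^\ast}J_t(\lambda)\conj{\lambda}(g^{i_0})\bigr)$. When $l=1$ the sum is empty and $M_t=1-p$; when $l=2$ the single surviving character is $\rho$, and inserting \eqref{eq:Jtrho4} with $t$ even yields a closed form; when $l=4$ the three nontrivial characters are $\cn,\rho,\cn^3$, and I would group the two order-$4$ terms as $2\Real(J_t(\cn)\imu^{3i_0})$, using $4\mid t$ in this branch together with \eqref{eq:Jtchi4} to obtain an explicit real number. Feeding each $M_t$ into $N_t=\frac{d}{4p}(p^t-1+M_t)$ then produces the $a=0$ rows of Table~\ref{tbl:Nts4}.

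For $a\ne 0$ I would use Lemma~\ref{le:MtJacobine0} in the form $M_t=1+(-1)^{t-1}q\sum_{\lambda\in H_{s/d}^\ast}J_t(\conj{\lambda})\lambda((-a_0)^t g^{i_0})$. The case $\frac{s}{d}=1$ gives $M_t=1$ at once; for $\frac{s}{d}=2$ only $\rho$ contributes and \eqref{eq:Jtrho4} finishes it after noting $\rho((-a_0)^t)=\rho(a_0)$ for odd $t$ and $=1$ for even $t$; for $\frac{s}{d}=4$, which forces $(d,l)=(1,1)$ with $t$ odd, the order-$4$ characters again combine into $2\Real(J_t(\cn)\cn((-a_0)^{3t})\imu^{3i_0})$ while the $\rho$-term is handled as before. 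Here the abbreviation $Q_{t,4}$ does its work: using $J_t(\cn^3)=\conjl{J_t(\cn)}$ and the explicit odd-$t$ values of \eqref{eq:Jtchi4}, I would verify $2\Real(J_t(\cn)\cn((-a_0)^{3t})\imu^{3i_0})=2\Real(Q_{t,4})$, so that $N_t$ takes the tabulated shape. The definition of $Q_{t,4}$ is tailored to the two residues $t\equiv 1,3\pmod 4$ precisely because those are the subcases of $(d,l)=(1,1)$ not pinned down by the congruence conditions.

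The main obstacle is purely bookkeeping: matching the phase factors. One must convert $\cn((-a_0)^{3t})$ and $\imu^{3i_0}$ into the $\cn(-a_0)\imu^{i_0}$ and $\ccn(-a_0)\imu^{i_0}$ appearing in $Q_{t,4}$, which relies on $\cn^{3t}=\ccn^{t}$, on $\imu^{3i_0}=\conjl{\imu^{i_0}}$, and on passing a complex conjugate through $\Real$; simultaneously one must keep the exponents of $p$ and of $\pn$ consistent with the residue of $t$ modulo $4$ and track the sign $(-1)^f$. A secondary point worth checking is that in the $(d,l)=(2,2)$ and $(2,1)$ rows the reduction of $\rho((-a_0)^t)$ is carried out with the correct parity, and that the constant $1$ from Lemma~\ref{le:MtJacobine0} combines cleanly with $p^t-1$ to give the stated $N_t$. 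None of these steps is deep, but the sign and conjugation conventions must be applied uniformly, and the role of $Q_{t,4}$ is exactly to render the odd-$t$ order-$4$ contributions manifestly real.
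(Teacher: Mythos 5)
Your overall strategy is exactly the paper's: compute $M_t$ case by case over the six $(d,l)$ possibilities via Lemma~\ref{le:MtJacobi0} ($a=0$) and Lemma~\ref{le:MtJacobine0} ($a\ne 0$), feed in the explicit Jacobi sums \eqref{eq:Jtchi4} and \eqref{eq:Jtrho4}, and convert to $N_t$ through Lemma~\ref{le:NtsumMt}. The $a=0$ half and the $\tfrac{s}{d}=1,2$ branches of the $a\ne 0$ half match the paper's computation step for step.

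There is, however, one concrete error in your case analysis for $a\ne 0$: you assert that $\tfrac{s}{d}=4$ ``forces $(d,l)=(1,1)$ with $t$ odd.'' It does not. The condition $\tfrac{s}{d}=4$ is equivalent to $d=1$, i.e.\ $\tfrac{m}{t}$ odd, and then $l=\gcd(t,4)$ may be $1$, $2$, or $4$; so this branch contains all three rows $(1,1)$, $(1,2)$, $(1,4)$ of the table. As written, your argument derives only the $(1,1)$ row and leaves the $(1,2)$ and $(1,4)$ rows of the $a\ne 0$ part of Table~\ref{tbl:Nts4} unaccounted for. The repair is cheap and stays within your own framework: the identity
\[
  M_t = 1 + (-1)^{t-1} p \bigl( 2 \Real( J_t( \cn ) {\cn}( (-a_0)^{3t} ) \imu^{3 i_0} ) + (-1)^{i_0} J_t(\rho) \rho(a_0^t) \bigr)
\]
you obtain is valid for all three subcases, since the character sum always runs over $H_4^\ast$; one merely has to specialize it using $t\equiv 2\pmod 4$ (for $(1,2)$) and $4\mid t$ (for $(1,4)$) together with \eqref{eq:Jtchi4} and \eqref{eq:Jtrho4} --- this is precisely what the paper does, noting that only in the $(1,1)$ subcase is $t$ not pinned down modulo $4$, which is why $Q_{t,4}$ carries the two residues $t\equiv 1,3\pmod 4$. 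With that correction, and the phase/conjugation bookkeeping you already flag carried out uniformly, your proof coincides with the paper's.
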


\begin{table}[!htb]
\centering
\caption{Values of $N_t$ for $s=4$ assuming \eqref{eq:restpd} and $q=p$.} 
\label{tbl:Nts4}
\begin{tabular}{ccc}
\hline
 $a$  & $N_t$                                                                             & $(d,l)$ \\
\hline
$a=0$ & $\tfrac{1}{4}(p^{t-1}-1)$                                                         & $(1,1)$ \\ 
      & $\tfrac{1}{4}\bigl( p^{t-1} - 1 - (-1)^{i_0} p^{\frac{t-2}{2}} (p-1) \bigr)$      & $(1,2)$ \\
      & $\tfrac{1}{4}\bigl( p^{t-1} - 1 - (-1)^{i_0} p^{\frac{t-4}{4}} (p-1) 
          \bigl( p^{\frac{t}{4}} + 2 \Real(\pn^{\frac{t}{2}} \imu^{i_0}) \bigr) \bigr)$   & $(1,4)$ \\
      & $\tfrac{1}{2} (p^{t-1} - 1)$                                                      & $(2,1)$ \\
      & $\tfrac{1}{2} \bigl( p^{t-1} -1 - (-1)^{i_0} p^{\frac{t-2}{2}} (p-1) \bigr)$      & $(2,2)$ \\
      & $ p^{t-1} -1 $                                                                    & $(4,1)$ \\
\hline
$a \ne 0$ & $\tfrac{1}{4} \bigl( p^{t-1} + (-1)^{i_0} (p^{\frac{t-1}{2}}\rho(a_0) 
              + 2 \Real Q_{t,4}) \bigr)$                                                      & $(1,1)$ \\
          & $\tfrac{1}{4}\bigl( p^{t-1} + (-1)^{i_0} p^{\frac{t-2}{4}} \bigl( p^{\frac{t-2}{4}} 
              - 2 \rho(a_0)\Real(\pn^{\frac{t}{2}} \imu^{i_0}) \bigr)\bigr)$                  & $(1,2)$ \\
          & $\tfrac{1}{4}\bigl( p^{t-1} + (-1)^{i_0} p^{\frac{t-4}{4}} 
               \bigl( p^\frac{t}{4} + 2 \Real(\pn^{\frac{t}{2}} \imu^{i_0}) \bigr) \bigr)$    & $(1,4)$ \\
          & $\tfrac{1}{2} \bigl( p^{t-1} +(-1)^{i_0} p^{\frac{t-1}{2}} \rho(a_0) \bigr)$      & $(2,1)$ \\
          & $\tfrac{1}{2} \bigl( p^{t-1} +(-1)^{i_0} p^{\frac{t-2}{2}} \bigr)$                & $(2,2)$ \\
          & $ p^{t-1} $                                                                       & $(4,1)$ \\
\hline
\end{tabular}
\end{table}

If $m>2$ is prime then we can use Theorem \ref{th:Nts=4} to obtain $P_m(a, 4, h)$. 
As with $s=2$, $P_m(a, 4, h) = \tfrac{1}{m}(N_m - N_1)$ and it is enough to consider $N_t$ for $t=1$, $m$. 

If $t=1$, we have $d=1$ by the assumption $m>2$, and $l=1$. 
In the case $p = m$ we have by Lemma \ref{le:Ntspecial}
\[
  N_1 = 
  \begin{cases}
    \tfrac{1}{4}(p-1) &\text{if $a=0$,}     \\
    0                 &\text{if $a \ne 0$.}
  \end{cases}
\]
If $p \ne m$, \eqref{eq:restpd} holds and from Table \ref{tbl:Nts4} $N_1 = 0$ for $a=0$ and
\begin{equation*}
   N_1 = \tfrac{1}{4} \bigl( 1 + (-1)^{i_0} \bigl( \rho( a_0 ) + 2 \Real(\ccn(-a_0) \imu^{i_0}) \bigr) \bigr)
\end{equation*}
for $a \ne 0$. 
Here $a_0 = -\frac{m}{ta} = -ma^{-1}$ and, modulo $4$,
\begin{equation*}
  i_0 \equiv 
  \begin{cases}
    h  &\text{if $m \equiv 1 \pmod 4$,} \\
    3h &\text{if $m \equiv 3 \pmod 4$.}
  \end{cases}
\end{equation*}
Thus $\rho(a_0) = \rho(-1) \rho(m) \conj{\rho}(a) = \rho(ma)$ and $\ccn(-a_0) = \ccn(ma^{-1}) = \ccn(ma^3) = \cn(m^3 a)$. 

If $t=m$, 
we have $d=1$ and, by the assumption $m>2$, $l=1$. 
Clearly, \eqref{eq:restpd} holds in this case. 
So for $a=0$ we have $N_m = \tfrac{1}{4}(p^{m-1}-1)$. 
For $ a \ne 0$ we have $i_0\equiv h \pmod 4$ by \eqref{eq:i0def} and $a_0 = -\frac{m}{ta}= -a^{-1}$. 
Thus $\rho(a_0) = \rho(-1) \conj{\rho}(a) = \rho(a)$ and $\cn(-a_0) = \cn(a^{-1}) = \ccn(a) = \cn(a^3)$. 
Table \ref{tbl:Nts4} now yields $N_m$ for both $m\equiv 1 \pmod 4$ and $m\equiv 3 \pmod 4$. 
Note that if $m=p$ then $m = q \equiv 1 \pmod 4$.

Combining the above we have
\[
  P_m(0, 4, h) = 
  \begin{cases}
    \tfrac{1}{4}(p^{p-2} - 1)  &\text{if $m=p$,}     \\
    \tfrac{1}{4m}(p^{m-1} - 1) &\text{ if $m \ne p$}
  \end{cases}
\]
for $a=0$. 
If $a \ne 0$ then
\[
  P_p(a, 4, h) = \tfrac{1}{4p} \bigl( p^{p-1} + (-1)^h \bigl( p^\frac{p-1}{2} \rho(a) 
    + 2 p^\frac{p-1}{4} \Real(\pn^{\frac{p-1}{2}} \cn(a) \imu^h) \bigr) \bigr)
\]
for $m=p$ and
\[
  P_m(a, 4, h) = \tfrac{1}{4m} \bigl( p^{m-1} - 1 + (-1)^h \bigl( \rho(a) \bigl( p^{\frac{m-1}{2}} - \rho(m) \bigr) + 2 \Real R_m \bigr) \bigr)
\]
for $m \ne p$, where 
\[
  R_m = 
  \begin{cases}
    p^\frac{m-1}{4} \pn^\frac{m-1}{2} \cn(a) \imu^h - \cn(m^3 a) \imu^h             &\text{ if $m \equiv 1 \pmod 4$,} \\ 
    (-1)^f p^\frac{m-3}{4} \pn^\frac{m+1}{2} \ccn(a) \imu^h - \cn(m^3 a) \imu^{3h}  &\text{ if $m \equiv 3 \pmod 4$.}
  \end{cases}
\]

%%
%%% New subsection
%%

\subsection{Case $s=3$}

For $s=3$ we again assume that $q=p$, i.e.\ $r=1$. 
Since $3 \mid (p-1)$, $p \equiv 1 \pmod 3$. 
As for $s=2$, we have three possibilities for $d$ and $l$: 
\[
(d,l) = 
  \begin{cases}
    (1,1) &\text{if $3\nmid \tfrac{m}{t}$, $3 \nmid t$,} \\
    (1,3) &\text{if $3\nmid \tfrac{m}{t}$, $3 \mid t$,}  \\
    (3,1) &\text{if $3\mid \tfrac{m}{t}$.}
  \end{cases}
\]
Let $\ck $ be the multiplicative character of order $3$ of $\Fq$ satisfying $\ck(g) = \zeta := \Nap^{2\pi\imu/3}$. 
Obviously $\cck = \ck^{-1} = \ck^2$ and consequently $\conjl{J_t(\ck)} = J_t(\ck^2)$. 
We also note the useful properties $\ck^2(-1)=\ck((-1)^2)=1$ and $\ck(-1) = \conjl{\ck^2(-1)}=1$. 
Let $a_3$ and $b_3$ be integers satisfying (see Theorems 3.1.1 and 3.1.2 in \cite{BEW})
\[
  a_3^2 + 3 b_3^2 = p \tc \quad a_3 \equiv -1 \; (\bmod \, 3) \tc \quad 3 b_3 \equiv (2g^\frac{p-1}{3} + 1) a_3 \; (\bmod \, p) \tc
\]
and denote 
\begin{align}\label{eq:pi3def}
  \pk = \ck (2)(a_3 + \imu b_3 \sqrt{3} ) \in \Z[\zeta] \tp
\end{align}
Since we assume $q=p$, \cite[Theorem 10.2.4]{BEW} is applicable and it yields together with $\pk \cpk = p$ that
\begin{align}\label{eq:Jacs3}
J_t(\ck ) =
  \begin{cases}
    -p^{\frac{t-3}{3}}\pk^{\frac{t}{3}} &\text{if $t \equiv 0 \pmod 3$,} \\
    p^{\frac{t-1}{3}}\pk^{\frac{t-1}{3}} &\text{if $t \equiv 1 \pmod 3$,} \\
    p^{\frac{t-2}{3}}\pk^{\frac{t+1}{3}} &\text{if $t \equiv 2 \pmod 3$.}
  \end{cases}
\end{align}
Let us now assume \eqref{eq:restpd} and compute the numbers $M_t$ and $N_t$. 
Again, in the other cases $N_t$ is obtained from Lemma \ref{le:Ntspecial}.

If $(d,l)=(1,1)$ then $t\equiv 1 \pmod 3$ or $t\equiv 2 \pmod 3$. In the case $a=0$ we again obtain $M_t=1-q$ by Lemma \ref{le:Mtbasrep}. 
For $a\ne0$ Lemma \ref{le:MtJacobine0} gives 
\begin{align*}
  \tfrac{(-1)^{t-1}}{q} (M_t-1) &= \sum_{\lambda \in H_3^\ast} J_t(\lambda) \conjl{\lambda}((-a_0)^t g^{i_0})  \\
                                &= J_t(\ck) \ck^2((-a_0)^t g^{i_0}) + \conjl{J_t(\ck) \ck^2((-a_0)^t g^{i_0})} \\
                                &= 2 \Real(J_t(\ck) \ck(a_0^{2t}) \zeta^{2i_0}) \tp
\end{align*}
As \eqref{eq:Jacs3} tells the value of the Jacobi sum in the above equation, $M_t$ and $N_t$ are easily obtained from this.

If $(d,l) = (1,3)$ then $t \equiv 0 \pmod 3$. 
The numbers $N_t$ can again be obtained as above using Jacobi sums and Lemmas \ref{le:MtJacobi0} and \ref{le:MtJacobine0} 
for $a=0$ and $a \ne 0$, respectively. 
Finally, for $(d,l) = (3,1)$ Lemmas \ref{le:MtJacobi0} and \ref{le:MtJacobine0} give $M_t = 1-q$ if $a=0$, 
and $M_t = 1$ if $a \ne 0$.

Again, Lemma \ref{le:NtsumMt} completes the following theorem. 

\begin{theo}
Assume $q=p$ and (\ref{eq:restpd}), and let
\[
  Q_{t,3} = 
  \begin{cases}
    p^\frac{t-1}{3} \pk^\frac{t-1}{3} \cck(a_0) \zeta^{2i_0} \tc &\text{if $t\equiv 1 \pmod 3$,} \\
    p^\frac{t-2}{3} \pk^\frac{t+1}{3} \ck(a_0) \zeta^{2i_0} \tc  &\text{if $t\equiv 2 \pmod 3$.}
  \end{cases}
\]
Further, let $\pk$ be as in (\ref{eq:pi3def}), $i_0$ as in (\ref{eq:i0def}), and let $a_0 = -\tfrac{m}{ta}$. 
Then the values of $N_t$ for $s=3$ are those listed in Table \ref{tbl:Nts3}.
\end{theo}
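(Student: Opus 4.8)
The plan is to read off each of the six entries of Table~\ref{tbl:Nts3} by substituting the expressions for $M_t$ already assembled in the discussion preceding the theorem into Lemma~\ref{le:NtsumMt}, which with $q=p$ reads $N_t = \tfrac{d}{3p}(p^t - 1 + M_t)$. Thus the only work remaining is to render the three prose computations of $M_t$ (one for each value of $(d,l)$) fully explicit, turning the Jacobi sums into closed form via the evaluations \eqref{eq:Jacs3} together with the elementary cubic-character identities $\ck(-1)=1$, $\cck = \ck^2$, and $\conjl{J_t(\ck)} = J_t(\ck^2)$.

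First I would treat $(d,l)=(1,1)$, where $t \equiv 1$ or $2 \pmod 3$. For $a=0$ the single surviving summand $\lambda=\lambda_0$ forces $M_t = 1-p$ by Lemma~\ref{le:Mtbasrep}, whence $N_t = \tfrac{1}{3}(p^{t-1}-1)$. For $a \ne 0$ the computation in the text leaves $M_t = 1 + (-1)^{t-1}p\cdot 2\Real\bigl(J_t(\ck)\ck(a_0^{2t})\zeta^{2i_0}\bigr)$, and the point is to recognise the inner product as $Q_{t,3}$. This is exactly where the residue of $t$ modulo $3$ enters: for $t \equiv 1$ one has $\ck^{2t} = \cck$ and $J_t(\ck) = p^{(t-1)/3}\pk^{(t-1)/3}$, while for $t \equiv 2$ one has $\ck^{2t} = \ck$ and $J_t(\ck) = p^{(t-2)/3}\pk^{(t+1)/3}$; in both cases the product collapses to precisely the two branches of $Q_{t,3}$. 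Lemma~\ref{le:NtsumMt} then yields the $(1,1)$ entry $\tfrac{1}{3}\bigl(p^{t-1} + (-1)^{t-1}\,2\Real Q_{t,3}\bigr)$.

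Next I would handle $(d,l)=(1,3)$, where $t \equiv 0 \pmod 3$ and $H_l = H_3$. Here the two nontrivial cubic characters contribute complex-conjugate terms -- using $\cck = \ck^2$ and $\conjl{J_t(\ck)} = J_t(\ck^2)$ -- that fold into a single $2\Real(J_t(\ck)\zeta^{2i_0})$ for $a=0$ (via Lemma~\ref{le:MtJacobi0}), and, after noting that $\ck^{2t}=\lambda_0$ kills the $a_0$-dependence, again into $2\Real(J_t(\ck)\zeta^{2i_0})$ for $a\ne 0$ (via Lemma~\ref{le:MtJacobine0}); the $t\equiv 0$ branch $J_t(\ck) = -p^{(t-3)/3}\pk^{t/3}$ of \eqref{eq:Jacs3} then delivers the closed form. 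Finally, for $(d,l)=(3,1)$ we have $\tfrac{s}{d}=1$, so \eqref{eq:restpd} forces $3\mid h$, i.e.\ $h=0$ and $i_0=0$; Lemmas~\ref{le:Mtbasrep} and~\ref{le:MtJacobine0} give $M_t = 1-p$ for $a=0$ and $M_t = 1$ for $a\ne 0$, and since $d=3$ the prefactor in Lemma~\ref{le:NtsumMt} is $\tfrac{1}{p}$, producing the last two entries $p^{t-1}-1$ and $p^{t-1}$ directly.

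The main obstacle is purely the bookkeeping in the $a\ne 0$, $l=1$ case: one must simultaneously track the residue $t \bmod 3$ (which selects both the power of $p$ and the exponent $(t\pm1)/3$ on $\pk$ in \eqref{eq:Jacs3}) and the character value $\ck\bigl((-a_0)^t g^{i_0}\bigr)$, and then verify that the conjugate pair of summands assembles into exactly $2\Real Q_{t,3}$ with the correct branch. Once that identification is confirmed, every table entry follows by the single substitution into $N_t = \tfrac{d}{3p}(p^t-1+M_t)$.
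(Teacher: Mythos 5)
Your proposal is correct and follows the paper's own route: the paper's argument is precisely the prose computation preceding the theorem, which substitutes the case-by-case evaluations of $M_t$ (via Lemmas \ref{le:Mtbasrep}, \ref{le:MtJacobi0}, \ref{le:MtJacobine0} and the Jacobi-sum formula \eqref{eq:Jacs3}) into Lemma \ref{le:NtsumMt}. Your identification of $J_t(\ck)\ck(a_0^{2t})\zeta^{2i_0}$ with the two branches of $Q_{t,3}$ according to $t \bmod 3$, and your handling of the $(1,3)$ and $(3,1)$ cases, match the paper's computations exactly.
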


\begin{table}[!htb]
\centering
\caption{Values of $N_t$ for $s=3$ assuming \eqref{eq:restpd} and $q=p$.} 
\label{tbl:Nts3}
\begin{tabular}{ccc}
\hline
 $a$  &                                                  $N_t$                                                  & $(d,l)$ \\
\hline
$a=0$ &                                        $\tfrac{1}{3}(p^{t-1}-1)$                                        & $(1,1)$ \\
      & $\tfrac{1}{3} \bigl( p^{t-1} -1 - 2(-1)^t p^{\frac{t-3}{3}} (p-1) \Real(\pk^{\frac{t}{3}}\zeta^{2 i_0}) \bigr)$ 
                                                                                                                & $(1,3)$ \\
      &                                              $p^{t-1} - 1$                                              & $(3,1)$ \\
\hline
$a \ne 0$ &                           $\tfrac{1}{3} \bigl( p^{t-1} - 2(-1)^t \Real Q_{t,3} \bigr)$                  & $(1,1)$ \\
          & $\tfrac{1}{3} \bigl( p^{t-1} + 2(-1)^t p^{\frac{t-3}{3}} \Real(\pk^{\frac{t}{3}} \zeta^{2 i_0}) \bigr)$ 
            & $(1,3)$ \\
          &                                                $p^{t-1}$                                                & $(3,1)$ \\
\hline
\end{tabular}
\end{table}

Again, we shall finally consider the situation when $m>3$ is prime. 
The computations are straightforward and similar as in the case $s=4$ so we just state the results:
\[
  P_m(0, 3, h) = 
  \begin{cases}
    \tfrac{1}{3} (p^{p-2} - 1) &\text{if $m=p$,}     \\
    \tfrac{1}{3m} (p^{m-1} - 1) &\text{if $m \ne p$,}
  \end{cases}
\]
for $a=0$ and
\[
  P_m(a, 3, h) =
  \begin{cases}
    \tfrac{1}{3p} \bigl( p^{p-1} + 2p^\frac{p-1}{3} \Real(\pk^\frac{p-1}{3} \ck(a) \zeta^{2h}) \bigr) &\text{if $m=p$,} \\
    \tfrac{1}{3m} (p^{m-1} - 1 + 2\Real L_m) &\text{if $m \ne p$,}
  \end{cases}
\]
for $a \ne 0$, where
\[
  L_m = 
  \begin{cases}
    \bigl( (p \pk)^\frac{m-1}{3} - \cck(m) \bigr) \ck(a) \zeta^{2h}
      &\text{if $m \equiv 1 \pmod 3$,} \\
    \bigl( p^\frac{m-2}{3} \pk^\frac{m+1}{3} \ck(a) \zeta^h - \cck(m) \bigr) \ck(a) \zeta^h 
      &\text{if $m \equiv 2 \pmod 3$.}
  \end{cases}
\]

%%
%%% New subsection
%%

\subsection{Case $s \mid (p^e + 1)$}

Assume $r = 2en$ and let $s>1$ be a factor of $p^e + 1$. 
Then $-1$ is a power of $p$ in $\Z_s$, and $s$ is called \emph{semiprimitive}. 
The semiprimitive numbers $N$ appear also in \cite{BauMcE, VluHie} in connection to semiprimitive cyclic codes. 
We recall Theorem 1 in \cite{Moinote}, which we shall use in the following form:

\begin{prop} \label{pr:genspr}
If $s \mid (p^e + 1)$ and $r = 2en$ then
\[
  \sum_{x \in \Fqa[t]} e_t(\gamma_t^i x^s) = 
  \begin{cases}
    (-1)^{nt} \sqrt{q^t} - 1         &\text{if $i \not\equiv k_s \pmod s$,} \\
    (-1)^{nt-1} (s-1) \sqrt{q^t} - 1 &\text{if $i \equiv k_s \pmod s$,}
  \end{cases}
\]
where $k_s = s/2$ if $p>2$, $2 \nmid nt$ and $2 \nmid (p^e + 1)/s$, and $k_s = 0$ otherwise.
\end{prop}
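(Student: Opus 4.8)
The plan is to rewrite the monomial sum as a sum of Gauss sums over $\Fq[t]$, evaluate these by exploiting the semiprimitivity of $s$, and then collapse the result by orthogonality. First I would apply Lemma~\ref{le:monGcon} with $n=s$ and $\alpha=\gamma_t^i$; since $\Norm_t(\gamma_t)=g$ this gives
\[
  \sum_{x \in \Fqa[t]} e_t(\gamma_t^i x^s) = \sum_{\lambda \in H_s} G_t(\conj{\lambda}) \lambda(g^i) = -1 + \sum_{\lambda \in H_s^\ast} G_t(\conj{\lambda}) \lambda(g^i) \tc
\]
where the trivial character is separated using $G_t(\lambda_0)=\sum_{x\in\Fqa[t]} e_t(x)=-1$. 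Everything thus reduces to evaluating the Gauss sums $G_t(\conj{\lambda})$ for the nontrivial $\lambda\in H_s^\ast$.

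Next I would lift these to $\Fq$ by the Davenport--Hasse identity $G_t(\conj{\lambda})=(-1)^{t-1}G_1(\conj{\lambda})^t$. Here the hypotheses enter decisively: each $\conj{\lambda}$ with $\lambda\in H_s^\ast$ has some order $d\mid s\mid(p^e+1)$, so $p^e\equiv-1\pmod d$ and $\conj{\lambda}$ is a \emph{semiprimitive} character of $\Fq$, $q=p^{2en}$. The explicit evaluation of semiprimitive Gauss sums (see \cite[Theorem~11.6.3]{BEW}) then yields $G_1(\conj{\lambda})=\veps_\lambda\sqrt{q}$ with $\veps_\lambda=\pm1$, the sign being $(-1)^{n-1}$ when $p=2$ and $(-1)^{n-1+(p^e+1)n/d}$ when $p>2$. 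Raising to the $t$-th power I obtain $G_t(\conj{\lambda})=(-1)^{nt-1}\sqrt{q^t}$ when $p=2$, and $G_t(\conj{\lambda})=(-1)^{nt-1}(-1)^{(p^e+1)nt/d}\sqrt{q^t}$ when $p>2$, so in all cases $G_t(\conj{\lambda})$ is a fixed power of $\sqrt{q^t}$ times a sign depending only on $d$ (and on $n$, $t$).

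The crux is to show that this $d$-dependent sign is exactly the character value $\conj{\lambda}(g^{k_s})$, i.e.\ that $G_t(\conj{\lambda})=(-1)^{nt-1}\conj{\lambda}(g^{k_s})\sqrt{q^t}$ \emph{uniformly} in $\lambda$. When $p=2$, or whenever $nt$ is even, or $(p^e+1)/s$ is even, the residual sign $(-1)^{(p^e+1)nt/d}$ is trivial and $k_s=0$ works. In the remaining regime ($p>2$ with $nt$ and $(p^e+1)/s$ both odd) the integer $s$ is even, $g^{s/2}$ is meaningful, and writing $\conj{\lambda}(g^{s/2})=(-1)^{as/d}$ for the residue $a$ coprime to $d$ with $\conj{\lambda}(g)=\Nap^{2\pi\imu a/d}$, a parity analysis of $s/d$ against the $2$-adic valuations of $s$ and $d$ identifies $(-1)^{as/d}$ with $(-1)^{(p^e+1)nt/d}$; this is precisely why $k_s=s/2$ is forced. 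I expect this sign bookkeeping, together with verifying that the case split in the definition of $k_s$ is exhaustive, to be the main obstacle. Once the uniform formula is in hand, orthogonality finishes the argument:
\[
  \sum_{\lambda\in H_s^\ast} G_t(\conj{\lambda}) \lambda(g^i) = (-1)^{nt-1}\sqrt{q^t}\sum_{\lambda\in H_s^\ast}\lambda(g^{i-k_s}) \tc
\]
where the inner sum equals $s-1$ if $i\equiv k_s\pmod s$ and equals $-1$ otherwise. Adding the $-1$ contributed by the trivial character then gives the two values claimed in Proposition~\ref{pr:genspr}.
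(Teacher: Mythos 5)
Your argument is correct, but be aware that the paper offers no proof of Proposition~\ref{pr:genspr} to compare against: it is recalled verbatim as Theorem~1 of \cite{Moinote}, so you have supplied a proof where the authors supply a citation. Your route --- decompose the monomial sum via Lemma~\ref{le:monGcon} into Gauss sums over $\Fq[t]$, lift to $\Fq$ by Davenport--Hasse, evaluate each $G_1(\conj{\lambda})$ by the semiprimitive (Stickelberger) formula, and recombine by orthogonality of $H_s$ --- is the standard one and is essentially how the cited source proceeds, so this is a faithful reconstruction rather than a genuinely new argument. Two points in your sketch should be made airtight. First, you invoke the semiprimitive evaluation with the exponent $e$, but for a character of order $d\mid s$ the least $j$ with $p^j\equiv -1\pmod d$ may be a proper divisor of $e$; you need the (routine) observation that the sign formula is unchanged when the minimal $j$ is replaced by an odd multiple, since for $p>2$ the quotient $(p^e+1)/(p^j+1)$ is a sum of an odd number of odd terms, and for $p=2$ only the parity of $\gamma$ in $q=p^{2j\gamma}$ matters. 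Second, the ``crux'' identity $(-1)^{(p^e+1)nt/d}=\conj{\lambda}(g^{k_s})$ should be written out rather than gestured at: writing $\lambda(g)=\Nap^{2\pi\imu c/s}$ so that $d=s/\gcd(c,s)$, one has $\conj{\lambda}(g^{s/2})=(-1)^c$, while in the only nontrivial regime ($p>2$, $nt$ odd, $(p^e+1)/s$ odd) the residual exponent reduces modulo $2$ to $(p^e+1)/d=\bigl((p^e+1)/s\bigr)\cdot(s/d)\equiv s/d=\gcd(c,s)$, and $\gcd(c,s)\equiv c\pmod 2$ precisely because $s$ is even there; this also confirms that the sign depends only on the order $d$ and that the case split defining $k_s$ is exhaustive. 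With those two details filled in, your orthogonality step gives exactly the two claimed values.
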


Note that Proposition \ref{pr:genspr} holds for $s=1$, too.

If $a=0$, Lemma \ref{le:Mtbasrep} and Proposition \ref{pr:genspr} immediately give
\begin{align}
  \frac{M_t}{q-1} + 1 = 
  \begin{cases}
    (-1)^{nt} \sqrt{q^t}         &\text{if \eqref{eq:l>i0not} holds,} \\
    (-1)^{nt-1} (l-1) \sqrt{q^t} &\text{if \eqref{eq:l=1i0} holds,}
  \end{cases} \label{eq:Mtspra0}
\end{align}
where the conditions are 
\begin{gather}
  \text{$l>1$ and $i_0 \not\equiv k_l \; (\bmod \, l)$,} \label{eq:l>i0not} \\
  \text{$l=1$; \; or \; $l>1$ and $i_0 \equiv k_l \; (\bmod \, l)$.} \label{eq:l=1i0}
\end{gather}

Assume next that $a \ne 0$. 
We combine Proposition \ref{pr:genspr} with Lemma \ref{le:Mtmonsum} and observe first 
that the congruence $\ind_{\gamma_t} a_0 + t_0 j + i_0 \equiv k_{s/d} \pmod{\tfrac{s}{d}}$ is solvable in $j$ if and only if
\begin{align}\label{eq:jcong}
  l \mid (k_{s/d} - i_0 - \ind_{\gamma_t} a_0) \quad \text{and} \quad 
    \tfrac{t_0}{l} j \equiv \tfrac{k_{s/d} - i_0 - \ind_{\gamma_t} a_0}{l} \pmod u \tp
\end{align}  

Assume first that $l \nmid (k_{s/d} - i_0 - \ind_{\gamma_t} a_0)$. 
Now, by Lemma \ref{le:Mtmonsum} and Proposition \ref{pr:genspr}, we get
\begin{align}
  uM_t &= \bigl( (-1)^{nt} \sqrt{q^t} - 1 \bigr) \sum_{c \in \Fqa} \sum_{j=0}^{u-1} e_1(g^j c^u) \nonumber\\
       &= \bigl( (-1)^{nt-1} \sqrt{q^t} + 1 \bigr) u \tp \label{eq:Mtsprlndiv}
\end{align}
    
Assume next that $l \mid (k_{s/d} - i_0 - \ind_{\gamma_t} a_0)$. 
Since the congruence in \eqref{eq:jcong} has unique solution $j_0 \in \{ 0, \dots, u-1 \}$, Lemma \ref{le:Mtmonsum} 
and Proposition \ref{pr:genspr} imply
\begin{align*}
  uM_t &= \bigl( (-1)^{nt-1} (\tfrac{s}{d} - 1) \sqrt{q^t} - 1 \bigr) \sum_{c \in \Fqa} e_1(g^{j_0} c^u) \\
  &\quad+ \bigl( (-1)^{nt} \sqrt{q^t} - 1 \bigr) \sum_{j \ne j_0} \sum_{c \in \Fqa} e_1(g^j c^u) \tp
\end{align*}
Here
\[
  \sum_{j \ne j_0} \sum_{c \in \Fqa} e_1(g^j c^u) = \sum_{j=0}^{u-1} \sum_{c \in \Fqa} e_1(g^j c^u)
    - \sum_{c \in \Fqa} e_1(g^{j_0} c^u) \tc
\]
and therefore
\[
  uM_t = (-1)^{nt-1} \frac{s}{d} \sqrt{q^t} \sum_{c \in \Fqa} e_1(g^{j_0} c^u) + \bigl( (-1)^{nt-1} \sqrt{q^t} + 1 \bigr) u \tp
\]
Finally, by applying Proposition \ref{pr:genspr} with $t=1$, we get
\[
  \sum_{c \in \Fqa} e_1(g^{j_0} c^u) = 
  \begin{cases}
    (-1)^{n} \sqrt{q} - 1         &\text{if \eqref{eq:sdj0not} holds,} \\
    (-1)^{n-1} (u-1) \sqrt{q} - 1 &\text{if \eqref{eq:sdorj0} holds.}
  \end{cases}
\]
where the conditions are 
\begin{gather}
  \text{$u>1$ and $j_0 \not\equiv k_u \; (\bmod \, u)$,} \label{eq:sdj0not} \\
  \text{$u=1$; \; or \; $u>1$ and $j_0 \equiv k_u \; (\bmod \, u)$.} \label{eq:sdorj0}
\end{gather}

Altogether, if $l \mid (k_{s/d} - i_0 - \ind_{\gamma_t} a_0)$, then
\begin{align}
  &M_t - 1 = \label{eq:Mtsprldiv} \\
  &\begin{cases}
    (-1)^{nt-1} \bigl( ((-1)^n \sqrt{q} - 1)l + 1 \bigr) \sqrt{q^t}           &\text{if \eqref{eq:sdj0not} holds,} \\
    (-1)^{nt-1} \bigl( ((-1)^{n-1} (u-1) \sqrt{q} - 1)l + 1 \bigr) \sqrt{q^t} &\text{if \eqref{eq:sdorj0} holds.}
  \end{cases} \nonumber
\end{align}

Combining \eqref{eq:Mtspra0}, \eqref{eq:Mtsprlndiv} and \eqref{eq:Mtsprldiv} with Lemma \ref{le:NtsumMt} we get the values of $N_t$ 
which we gather in the following theorem.

\begin{theo}
Assume $s \mid (p^e + 1)$ and $r = 2en$. 
Then the $N_t$ are those listed in Table \ref{tbl:Ntssemipr}. 
Especially, if $a=0$ and $l=1$ then $N_t = \tfrac{d}{s} (q^{t-1} -1)$, and if $a \ne 0$ and $d=s$ then $N_t = q^{t-1}$.
\end{theo}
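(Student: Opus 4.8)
The plan is to feed the $M_t$ formulas established earlier in this subsection into the identity
\[
  N_t = \frac{d}{sq}\bigl( q^t - 1 + M_t \bigr)
\]
of Lemma \ref{le:NtsumMt} and to simplify. The analytic content --- evaluating the semiprimitive monomial sums through Proposition \ref{pr:genspr} --- is already in place, so what remains is algebraic bookkeeping: carrying the signs $(-1)^{nt}$, the factors $\sqrt{q^t}$, and the case conditions \eqref{eq:l>i0not}, \eqref{eq:l=1i0}, \eqref{eq:sdj0not}, \eqref{eq:sdorj0} through to each row of Table \ref{tbl:Ntssemipr}. The one identity used repeatedly is $q^t - 1 - (q-1) = q(q^{t-1}-1)$, which collapses the constant parts.

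For $a=0$ I would start from \eqref{eq:Mtspra0}, giving $M_t = (q-1)\bigl( (-1)^{nt}\sqrt{q^t} - 1 \bigr)$ under \eqref{eq:l>i0not} and $M_t = (q-1)\bigl( (-1)^{nt-1}(l-1)\sqrt{q^t} - 1 \bigr)$ under \eqref{eq:l=1i0}. In both branches the constant terms collapse to $\frac{d}{s}(q^{t-1}-1)$, and there remains a correction proportional to $\frac{d(q-1)}{sq}\sqrt{q^t}$ whose sign and coefficient are read off from the relevant condition. When $l=1$ we sit in \eqref{eq:l=1i0} with $l-1=0$, the correction vanishes, and $N_t = \frac{d}{s}(q^{t-1}-1)$, which is the first assertion of the final sentence.

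For $a\neq0$ I would split on whether $l \mid (k_{s/d} - i_0 - \ind_{\gamma_t} a_0)$. If it does not, \eqref{eq:Mtsprlndiv} gives $M_t = (-1)^{nt-1}\sqrt{q^t} + 1$; substituting into Lemma \ref{le:NtsumMt} makes the constants cancel to $\frac{d}{s}q^{t-1}$ plus the sign-carrying square-root term. If it does divide, I would insert \eqref{eq:Mtsprldiv} into the same relation, further distinguishing \eqref{eq:sdj0not} from \eqref{eq:sdorj0}. The case $d=s$ forces $s/d=1$, hence $l=u=1$ and \eqref{eq:sdorj0} holds; with $u-1=0$ the bracket in \eqref{eq:Mtsprldiv} vanishes, so $M_t=1$ and $N_t = \frac{d}{s}q^{t-1} = q^{t-1}$, the second assertion.

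I expect no real obstacle, since all the difficulty of this subsection lies upstream, in extracting the $M_t$ formulas from Proposition \ref{pr:genspr} and Lemma \ref{le:Mtmonsum}. The only delicate point is the consistent reading of $\sqrt{q^t} = q^{t/2}$ together with the parity conventions encoded in $k_s$, so that each condition is paired with the correct entry of Table \ref{tbl:Ntssemipr}.
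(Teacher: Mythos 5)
Your proposal is correct and matches the paper's own (very brief) argument, which likewise just combines \eqref{eq:Mtspra0}, \eqref{eq:Mtsprlndiv} and \eqref{eq:Mtsprldiv} with Lemma \ref{le:NtsumMt}. The special cases are handled exactly as intended: $l=1$ places you in \eqref{eq:l=1i0} with vanishing correction, and $d=s$ gives $l=u=1$ so that the bracket in \eqref{eq:Mtsprldiv} vanishes and $M_t=1$.
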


\begin{table}[!htb]
\centering
\caption{Values of $N_t$ for $s \mid (p^e + 1)$ and $r = 2en$ with ``$\mid$'' and ``$\nmid$'' telling 
         whether $l$ divides $k_{s/d} - i_0 - \ind_{\gamma_t} a_0$ or not.} 
\label{tbl:Ntssemipr}
\begin{tabular}{c@{}cc}
\hline
 $a$  & $N_t$ & with \\
\hline
$a=0$ & $\tfrac{d}{s} \bigl( q^{t-1} - 1 + (-1)^{nt} (q-1) \sqrt{q^{t-2}} \bigr)$ & \eqref{eq:l>i0not} \\
      & \rule[-1ex]{0pt}{2ex}$\tfrac{d}{s} \bigl( q^{t-1} - 1 - (-1)^{nt} (q-1)(l-1) \sqrt{q^{t-2}} \bigr)$ & \eqref{eq:l=1i0} \\
\hline
$a \ne 0$ & $\tfrac{d}{s} \bigl( q^{t-1} - (-1)^{nt} \sqrt{q^{t-2}} \bigr)$ & $\nmid$ \\
          & $\tfrac{d}{s} \bigl( q^{t-1} - (-1)^{nt} \bigl( ((-1)^n \sqrt{q} - 1)l + 1 \bigr) \sqrt{q^{t-2}} \bigr)$ 
          & $\mid$, \eqref{eq:sdj0not} \\
          & \rule[-1ex]{0pt}{2ex}$\tfrac{d}{s} \bigl( q^{t-1} - (-1)^{nt} \bigl( ((-1)^{n-1} (u-1) \sqrt{q} - 1)l + 1 \bigr) \sqrt{q^{t-2}} \bigr)$ 
          & $\mid$, \eqref{eq:sdorj0} \\
\hline
\end{tabular}
\end{table}

%%
%%% New subsection
%%

\subsection{The semiprimitive and index $2$ cases for $p=2$} \label{sbs:sprind2}

In this subsection we assume that $p=2$ and show how to calculate $P_m (a, q-1, h)$ in semi\-prim\-itive or index $2$ cases by applying 
the results from \cite[II]{MoiThe} and \cite{RinThe}. 
In particular, we give $P_m (0, q-1, h)$ explicitly for all $m \le 30$. 
We also give a table of these numbers for $q=2$, $4$, $8$, and small values of $m$ to cross-check our formulae against the results given by 
the irreducible polynomial generator in \cite{polgen}.

As $p=2$, the semiprimitive case holds for an odd integer $N>1$ if $-1$ is a power of $2$ in $\Z_N$. 
Correspondingly, the \emph{index $2$ case} is said to hold for $N$ if $-1 \notin \gen{2} \subseteq \Z_N$ and $\ord_N 2 = \phi(N)/2$ 
where $\phi$ is the Euler function.

If $s$ is semiprimitive then clearly its factors, especially $l$, $s/d$ and $u$ in Lemmas \ref{le:Mtbasrep} and \ref{le:Mtmonsum}, are too. 
Proposition \ref{pr:genspr} can be written for characteristic $p=2$ in the following form, see also \cite[II Theorem 1]{MoiThe}. 

\begin{prop} \label{pr:sumspr}
Assume that $rt = N' \ord_N 2$, $N>1$ and $-1$ is a power of $2$ modulo $N$. 
Then
\[
  \sum_{x \in \Fq[t]} e_t(\gamma_t^a x^N) =
  \begin{cases}
    (-1)^{N'} \sqrt{q^t}         &\text{if $N \nmid a$,} \\
    (-1)^{N'-1} (N-1) \sqrt{q^t} &\text{if $N \mid a$.}
  \end{cases}
\]
\end{prop}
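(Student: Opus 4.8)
The plan is to obtain Proposition \ref{pr:sumspr} as the characteristic-two specialization of Proposition \ref{pr:genspr}, after re-indexing the field so the parameters line up and then restoring the term $x=0$ that Proposition \ref{pr:genspr} omits. First I would unwind the hypothesis. Since $-1$ is a power of $2$ modulo $N$, the order $\ord_N 2$ is even: writing $e = \tfrac12 \ord_N 2$ one checks that $2^e \equiv -1 \pmod N$, so $N \mid (2^e + 1)$, i.e.\ $N$ is semiprimitive in exactly the sense demanded by Proposition \ref{pr:genspr} with $p = 2$ and $s = N$. The assumption $rt = N' \ord_N 2 = 2eN'$ then says that $\Fq[t] = \F_{2^{rt}}$ is the degree-$N'$ extension of $\F_{2^{2e}}$, so I would apply Proposition \ref{pr:genspr} to the sum over $\Fqa[t]$ with base field $\F_{2^{2e}}$ (its parameter ``$r$'' being $2e$, its ``$n$'' being $1$) and extension degree $N'$. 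Since in that proposition both the sign and the exponent of $\sqrt{q^t}$ depend on the data only through the product ``$nt$'', which here equals $1\cdot N' = N'$, and since $\sqrt{q^t} = 2^{rt/2} = 2^{eN'}$ is unchanged, its two branches read $(-1)^{N'}\sqrt{q^t} - 1$ and $(-1)^{N'-1}(N-1)\sqrt{q^t} - 1$.

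Next I would identify the case split. In characteristic $2$ we fall into the ``otherwise'' branch of the definition of $k_s$, so $k_N = 0$; hence ``$i \not\equiv k_s \pmod s$'' becomes ``$N \nmid a$'' and ``$i \equiv k_s \pmod s$'' becomes ``$N \mid a$'', precisely the dichotomy of Proposition \ref{pr:sumspr}. The one genuinely fiddly point, and the place I expect to spend care, is the reconciliation of indexings: the coefficient $\gamma_t^a$ is written through the primitive element $\gamma_t$ of $\Fq[t]$ fixed in Section \ref{sec:notbasf}, whereas applying Proposition \ref{pr:genspr} over $\F_{2^{2e}}$ invokes that proposition's own primitive element. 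I would resolve this by noting that $\sum_{x} e_t(\beta x^N)$ depends on $\beta$ only through its class modulo $N$-th powers, and that $N \mid (q^t - 1)$ while any two primitive elements differ by an exponent coprime to $q^t - 1$ and hence coprime to $N$; therefore the condition $N \mid \ind(\beta)$ is independent of the chosen primitive element, and the two case-conditions agree.

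Finally I would restore the excluded point. The sum in Proposition \ref{pr:sumspr} runs over all of $\Fq[t]$ and thus differs from the sum in Proposition \ref{pr:genspr} only by the term at $x = 0$, namely $e_t(\gamma_t^a \cdot 0^N) = e_t(0) = 1$. Adding this $1$ to each branch cancels the trailing $-1$ and leaves $(-1)^{N'}\sqrt{q^t}$ when $N \nmid a$ and $(-1)^{N'-1}(N-1)\sqrt{q^t}$ when $N \mid a$, which is exactly the assertion of Proposition \ref{pr:sumspr}. Everything beyond the primitive-element reconciliation is routine substitution, so no further obstacle is anticipated.
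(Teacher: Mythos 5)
Your derivation is correct and is exactly the route the paper takes: the paper presents Proposition \ref{pr:sumspr} as a rewriting of Proposition \ref{pr:genspr} for characteristic $2$ (citing also [MoiThe, II Theorem 1]), and your re-indexing of the tower so that $\F_{2^{rt}}$ is viewed as the degree-$N'$ extension of $\F_{2^{2e}}$ with $e=\tfrac12\ord_N 2$, together with the $k_N=0$ observation and the restoration of the $x=0$ term, supplies precisely the omitted details.
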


Similarly, if the index $2$ case holds for $N$ then its factors satisfy either the index $2$ or the semiprimitive case, 
see \cite[II Lemmas 2 and 5]{MoiThe}. 
In what follows we consider only square-free $N$ in the index $2$ cases. 
From the general classification result \cite[II Lemmas 3 and 6]{MoiThe} it follows that the following three cases are then possible, 
where $p_1$ and $p_2$ are primes:

\begin{enumerate}[\hspace{1.5em} 1.]
\item\label{it:Np}
$N = p_2 \equiv 7 \pmod 8$; 

\item\label{it:Npqroot}
$N = p_1 p_2$, $p_1 \equiv 5 \pmod 8$, $p_2 \equiv 3 \pmod 8$, $2$ is a primitive root modulo $p_1$ and modulo $p_2$;

\item\label{it:Npqnoro}
$N = p_1 p_2$, $p_1 \equiv 3\tc\ 5 \pmod 8$, $p_2 \equiv 7 \pmod 8$, $\ord_{p_1} 2 = p_1 - 1$, and $\ord_{p_2} 2 = (p_2 - 1)/2$ 
with $-1 \notin \gen{2} \subseteq \Z_{p_2}$.
\end{enumerate}

The value distribution of the monomial sums in the above square-free cases were studied in \cite{BauMyk} (case \ref{it:Np}) 
and \cite{VluHD} (cases \ref{it:Npqroot} and \ref{it:Npqnoro}). 
The general (characteristic $2$) index $2$ cases has been studied in \cite{MoiThe} (cases \ref{it:Np} and \ref{it:Npqroot}) 
and in \cite{RinThe} (case \ref{it:Npqnoro}). 
We are able to compute the value distribution except for few case \ref{it:Npqnoro} parameters. 
Knowing only the value distribution and not the exact values is not enough to compute the $P_m(a, s, h)$ exactly but with
the methods from \cite{MoiThe} and \cite{RinThe} we get (except for some cases \ref{it:Npqnoro}) at most two possibilities for the values 
of each $P_m(a, s, h)$ when the index $2$ case holds for $m$. 

Let us next recall how the index $2$ sums $\sum_{x \in \Fq[t]} e_t(\gamma_t^i x^N)$ can be computed in our three cases. 
For the results and methods we refer to \cite{MoiVaa, MoiThe} for cases \ref{it:Np} and \ref{it:Npqroot} and to \cite{RinThe}, 
especially Theorems 4, 6 and 7, for case \ref{it:Npqnoro}. 
Also \cite{BauMyk, VluHD} can be used. 
Let $rt = r' t'$ with $r' = \phi(N)/2 = \ord_N 2$ and denote by $\delta = \Norm (\gamma_t)$ a primitive element of $\F_{\!2^{r'}}$, 
where $\Norm$ is the norm from $\Fq[t]$ onto $\F_{\!2^{r'}}$. 
Further, since $N \mid (2^{r'} - 1)$, there exists a multiplicative character $\chi$ of $\F_{\!2^{r'}}$ 
for which $\chi(\delta) = \Nap^{2\pi \imu/N}$. 
The character $\chi$ has order $N$ and $\chi' = \chi \circ \Norm$ is a multiplicative character of order $N$ of $\Fq[t]$.

The value of the monomial index $2$ sum $\sum_{x \in \Fq[t]} e_t(\gamma_t^i x^N)$ can now be computed in terms 
of $G_t (\chi) = \sum_{x \in \Fqa[t]} e_t(x) \chi'(x)$ by using \cite[Theorem 2]{MoiThe} in the case \ref{it:Np}, 
\cite[Theorem 3]{MoiThe} in the case \ref{it:Npqroot} and \cite[eq.~(16), Theorem 4]{RinThe} in the case \ref{it:Npqnoro}. 
By the Davenport-Hasse identity
\begin{align} \label{eq:Gaussdown}
  G_t (\chi) = -(-F_{r'} (\chi))^{t'}\tc \qquad F_{r'}(\chi) = \sum_{x \in \F_{\!2^{r'}}} \chi(x) e(x) \tc
\end{align}
where in the last Gauss sum over $\F_{\!2^{r'}}$ $e$ is the canonical additive character of $\F_{\!2^{r'}}$. 
These latter Gauss sums can be computed up to the sign of the imaginary part, see \cite[p.~1245]{MoiVaa} and \cite[p.~9 and Theorem 7]{RinThe}.

The above cases cover all values $m \le 30$, so we able to compute (possibly up to two choices) $P_m (0, s, h)$ 
for $m \le 30$ by Lemma \ref{le:Mtbasrep} and $P_m (a, s, h)$ for $a \ne 0$, $m \le 30$, by Lemma \ref{le:Mtmonsum}. 
As an example we give $P_m := P_m(0, q-1, h)$ for $m \le 30$. 
Since $s = q-1$, we have $b$ fixed and $h = \ind b = \ind_g b$. 
We consider the values $m \le 30$ in the following order: $2^k$ ($2$, $4$, $8$, $16$), semiprimitive primes $v$ ($3$, $5$, $11$, $13$, 
$17$, $19$, $29$) and the cases related to these: $2v$ ($6$, $10$, $22$, $26$), $4v$ ($12$, $20$), $8v$ ($24$), $v^2$ ($9$, $25$), 
$2v^2$ ($18$), $v^3$ ($27$). 
Finally, we cover the index $2$ cases: $7$, $23$ with related $14$, $28$ (case \ref{it:Np}), $15$ with related $30$ (case \ref{it:Npqroot}), 
and $21$ (case \ref{it:Npqnoro}).

If $m = 2^k$ then \eqref{eq:PmsumNt} gives $P_m = \tfrac{1}{m} (N_m - N_{m/2})$. 
By Lemma \ref{le:Ntspecial} $N_{m/2} = \tfrac{q^{m/2} - 1}{q-1}$. 
Lemma \ref{le:Mtbasrep} gives $M_m = 1-q$ and then $N_m = \tfrac{q^{m-1} - 1}{q-1}$ by Lemma \ref{le:NtsumMt}. 
Thus, as in \cite[Example 2]{MoiKlo},
\[
  P_m = \frac{q^{m-1} - q^{m/2}}{m(q-1)} \tp
\]

In the case $m = v$ equation \eqref{eq:PmsumNt} implies $P_v = \tfrac{1}{v} (N_v - N_1)$. 
If $v \nmid (q-1)$ then $d = l = 1$ for both values $t = 1$, $v$. 
By Lemma \ref{le:Mtbasrep}, $M_1 = M_v = 1-q$ and therefore Lemma \ref{le:NtsumMt} gives $N_1 = 0$ and $N_v = \tfrac{q^{v-1} - 1}{q-1}$. 
Thus
\[
  P_v = \frac{q^{v-1} - 1}{v(q-1)} \tp
\]
Assume now that $v \mid (q-1)$. 
If $t=1$ then $d=v$ and $l=1$. 
By Lemma \ref{le:Ntspecial} $N_1 = 0$ if $v \nmid h$. 
If $v \mid h$ then $M_1 = 1-q$ by Lemma \ref{le:Mtbasrep} and therefore Lemma \ref{le:NtsumMt} gives $N_1 = 0$ in this case, too. 
If $t=v$ then $d=1$ and $l=v$. 
By Lemma \ref{le:Mtbasrep} and Proposition \ref{pr:sumspr} we now have
\[
  M_v =
  \begin{cases}
    (q-1) (-1 \pm \sqrt{q^v})       &\text{if $v \nmid h$,} \\
    (q-1) (-1 \mp (v-1) \sqrt{q^v}) &\text{if $v \mid h$,}  \\
  \end{cases}
\]
where $\pm = (-1)^{v'}$ with $rv = v' \ord_v 2$. 
Since $v$ is odd, $\pm = (-1)^\frac{r}{\ord_v 2}$. 
By using Lemma \ref{le:NtsumMt} and combining the above results we get
\begin{align} \label{eq:P3}
  P_v - \frac{q^{v-1}-1}{v(q-1)} =
  \begin{cases}
    0                                 &\text{if $\ord_v 2 \nmid r$,}                  \\
    \pm \tfrac{1}{v} \sqrt{q^{v-2}}   &\text{if $\ord_v 2 \mid r$, $v \nmid \ind b$,} \\
    \mp \tfrac{v-1}{v} \sqrt{q^{v-2}} &\text{if $\ord_v 2 \mid r$, $v \mid \ind b$.}
  \end{cases}
\end{align}

To consider the case $m = 2v$ we note the following. 
If $\tfrac{m}{2}$ is odd we have $\mu(m) N_1 + \mu(\tfrac{m}{2}) N_2 = 0$. 
Namely, for $t=1$, $d = \gcd(m, q-1) = \gcd(\tfrac{m}{2}, q-1)$, and Lemma \ref{le:Ntspecial} gives $N_1 = 0$ or $d$ 
if $d \nmid h$ or $d \mid h$, respectively. 
For $t=2$ we have $d = \gcd(\tfrac{m}{2}, q-1)$ again and $l = \gcd(2, \tfrac{q-1}{d}) = 1$. 
If $d \nmid h$ then $N_2 = 0$ by Lemma \ref{le:Ntspecial}. 
If $d \mid h$ then Lemma \ref{le:Mtbasrep} gives $M_2 = 1-q$ and therefore $N_2 = \tfrac{d}{(q-1)q} (q^2 - 1 + 1 -q) = d$. 
This proves the claim $\mu(m) N_1 + \mu(\tfrac{m}{2}) N_2 = 0$ for odd $\tfrac{m}{2}$. 
Note that this claim holds true also if $8 \mid m$ or $m$ is non-square-free. 
The use of \eqref{eq:PmsumNt} now gives $P_{2v} = \tfrac{1}{2v} (N_{2v} - N_v)$ by the above consideration.

For $t=v$, $d = \gcd(2, q-1) = 1$ and $2 \mid \tfrac{m}{t}$. 
By Lemma \ref{le:Ntspecial} $N_v = \tfrac{q^v - 1}{q-1}$. 
For $t = 2v$, $d=1$ again and $l = \gcd(2v, q-1) = \gcd(v, q-1)$. 
If $v \nmid (q-1)$ then $l=1$ and Lemmas \ref{le:NtsumMt} and \ref{le:Mtbasrep} yield
\[
  N_{2v} = \frac{1}{(q-1)q} (q^{2v} - 1 + 1 - q) = \frac{q^{2v-1} - 1}{q-1} \tp
\]
If $v \mid (q-1)$ then $l=v$ and Lemma \ref{le:Mtbasrep} and Proposition \ref{pr:sumspr} give
\[
  M_{2v} =
  \begin{cases}
    (q-1) (-1 + q^v)       &\text{if $v \nmid h$,} \\
    (q-1) (-1 - (v-1) q^v) &\text{if $v \mid h$,}  \\
  \end{cases}
\]
since now $\tfrac{2rv}{\ord_v 2}$ is even. 
The use of Lemma \ref{le:NtsumMt} together with these results gives
\[
  P_{2v} - \frac{q^{2v-1} - q^v}{2v(q-1)} =
  \begin{cases}
    0                       &\text{if $\ord_v 2 \nmid r$,}                  \\
    \frac{1}{2v} q^{v-1}    &\text{if $\ord_v 2 \mid r$, $v \nmid \ind b$,} \\
    -\frac{v-1}{2v} q^{v-1} &\text{if $\ord_v 2 \mid r$, $v \mid \ind b$.}  \\
  \end{cases}
\]
For the remaining cases related to semiprimitive primes $v$ the use of Lemmas \ref{le:Ntspecial}, \ref{le:NtsumMt} 
and \ref{le:Mtbasrep} and Proposition \ref{pr:sumspr} gives the following results. 
The details of the calculations are given in \cite{auxcal}.

If $m = 4v$ ($12$, $20$) then
\[
  P_{4v} - \frac{q^{2v} (q^{2v-1} - 1)}{4v(q-1)} =
  \begin{cases}
    -\frac{q^2}{4v}                            &\text{if $\ord_v 2 \nmid r$,}                  \\
    \frac{q^{2v-1}}{4v}                        &\text{if $\ord_v 2 \mid r$, $v \nmid \ind b$,} \\
    -\frac{v-1}{4v} q^{2v-1} - (\frac{q}{2})^2 &\text{if $\ord_v 2 \mid r$, $v \mid \ind b$.}  \\
  \end{cases}
\]

If $m = 24$ then
\[
  P_{24} - \frac{q^{12} (q^{11} - 1)}{24(q-1)} =
  \begin{cases}
    -\frac{q^4 (q^3 - 1)}{24(q-1)}                         &\text{if $2 \nmid r$,}                   \\
    \frac{q^{11}}{24}                                      &\text{if $2 \mid r$, $3 \nmid \ind b$,}  \\
    -\frac{q^{11}}{12} - \frac{q^4 (q^3 - 1)}{8(q-1)}      &\text{if $2 \mid r$, $3 \mid \ind b$.}   \\
  \end{cases}
\]

If $m = v^2$ ($9$, $25$, which are semiprimitive) then
\[
  P_{v^2} - \frac{q^{v^2-1} - 1}{v^2 (q-1)} = \frac{q^{v-1} - 1}{v^2 (q-1)}
\]
for $\ord_v 2 \nmid r$,
\[
  P_{v^2} - \frac{q^{v^2-1} - 1}{v^2 (q-1)} =
  \begin{cases}
    \pm_1 \frac{1}{v^2} \sqrt{q^{v^2-2}}                               &\text{if $v \nmid \ind b$,} \\
    -\frac{q^{v-1} - 1}{v(q-1)} \mp_1 \frac{v-1}{v^2} \sqrt{q^{v^2-2}} &\text{if $v \mid \ind b$}
  \end{cases}
\]
for $\ord_v 2 \mid r$, $\ord_{v^2} 2 \nmid r$, and
\begin{align*}
  &P_{v^2} - \frac{q^{v^2-1} - 1}{v^2 (q-1)} = \\
  &\begin{cases}
    \pm_2 \frac{1}{v^2} \sqrt{q^{v^2-2}} &\text{if $v \nmid \ind b$,}    \\
    \pm_2 \frac{1}{v^2} \sqrt{q^{v^2-2}} - \frac{1}{v} \bigl( \frac{q^{v-1} - 1}{q-1} \pm_1 \sqrt{q^{v-2}} \bigr) 
      &\text{if $v \mid \ind b$, $v^2 \nmid \ind b$,} \\
    \mp_2 \frac{v^2 - 1}{v^2} \sqrt{q^{v^2-2}} - \frac{1}{v} \bigl( \frac{q^{v-1} - 1}{q-1} \mp_1 (v-1) \sqrt{q^{v-2}} \bigr) \hspace{-6pt}
      &\text{if $v^2 \mid \ind b$}
  \end{cases}
\end{align*}
for $\ord_{v^2} 2 \mid r$, where $\pm_i = (-1)^\frac{r}{\ord_{v^i} 2}$ for $i = 1$, $2$.

If $m = 18$ then
\[
  P_{18} - \frac{q^9 (q^8 - 1)}{18(q-1)} = -\frac{q^3 (q+1)}{18}
\]
for $2 \nmid r$,
\[
  P_{18} - \frac{q^9 (q^8 - 1)}{18(q-1)} = 
  \begin{cases}
    \frac{q^8}{18}                    &\text{if $3 \nmid \ind b$,} \\
    -\frac{q^3}{3} (\frac{q^5}{3} + \frac{q+1}{2}) &\text{if $3 \mid \ind b$}
  \end{cases}
\]
for $2 \mid r$ and $6 \nmid r$, and
\begin{align*}
  P_{18} - \frac{q^9 (q^8 - 1)}{18(q-1)} = 
  \begin{cases}
    \frac{q^8}{18}                    &\text{if $3 \nmid \ind b$,} \\
    \frac{q^8}{18} - \frac{q^2(q^3 - 1)}{6(q-1)} &\text{if $3 \mid \ind b$, $9 \nmid \ind b$,} \\
    -\frac{q^2(8q^6 + 3q(q+1) - 6)}{18} &\text{if $9 \mid \ind b$}
  \end{cases}
\end{align*}
for $6 \mid r$.

If $m = 27$ then
\[
  P_{27} - \frac{q^{26} - 1}{27(q-1)} = -\frac{q^8 - 1}{27(q-1)}
\]
for $2 \nmid r$,
\[
  P_{27} - \frac{q^{26} - 1}{27(q-1)} = 
  \begin{cases}
    \pm \frac{1}{27} \sqrt{q^{25}}                                         &\text{if $3 \nmid \ind b$,} \\
    -\frac{1}{27} \bigl( \frac{3(q^8 - 1)}{q-1} \pm 2 \sqrt{q^{25}} \bigr) &\text{if $3 \mid \ind b$}   \\
  \end{cases}
\]
for $2 \mid r$, $6 \nmid r$,
\begin{align*}
  P_{27} - \frac{q^{26} - 1}{27(q-1)} = 
  \begin{cases}
    \pm \frac{1}{27} \sqrt{q^{25}} &\text{if $3 \nmid \ind b$,} \\
    -\frac{q^8 - 1}{9(q-1)} \pm \frac{1}{27} (\sqrt{q^{25}} - 3\sqrt{q^7}) \hspace{-5pt} 
      &\text{if $3 \mid \ind b$, $9 \nmid \ind b$,} \\
    -\frac{q^8 - 1}{9(q-1)} \mp \frac{2}{27} (4 \sqrt{q^{25}} - 3\sqrt{q^7}) \hspace{-6.5pt} 
      &\text{if $9 \mid \ind b$}
  \end{cases}
\end{align*}
for $6 \mid r$, $18 \nmid r$, and
\begin{align*}
  P_{27} &- \frac{q^{26} - 1}{27(q-1)} \\
         &= 
  \begin{cases}
    \pm \frac{1}{27} \sqrt{q^{25}} &\text{if $3 \nmid \ind b$,} \\
    -\frac{q^8 - 1}{9(q-1)} \pm \frac{1}{27} (\sqrt{q^{25}} - 3\sqrt{q^7}) 
      &\text{if $3 \mid \ind b$, $27 \nmid \ind b$,} \\
    -\frac{q^8 - 1}{9(q-1)} \mp \frac{2}{27} (13 \sqrt{q^{25}} - 12\sqrt{q^7}) 
      &\text{if $27 \mid \ind b$}
  \end{cases}
\end{align*}
for $18 \mid r$, where $\pm = (-1)^\frac{r}{2}$.

In the index $2$ case \ref{it:Np}, $m = p_2$ ($7$, $23$) and we have $P_m = \tfrac{1}{m} (N_m - N_1)$ by \eqref{eq:PmsumNt}. 
In considering $N_1$ we have $d = \gcd(m, q-1)$ and $l=1$. 
Thus Lemma \ref{le:Ntspecial} gives $N_1 = 0$ if $d \nmid h$. 
In the case $d \mid h$ the use of Lemmas \ref{le:NtsumMt} and \ref{le:Mtbasrep} implies $N_1 = 0$, too.

If $t=m$ then $d=1$ and $l = \gcd(m, q-1)$. 
If $m \nmid (q-1)$ then $l=1$ and $M_m = 1-q$ by Lemma \ref{le:Mtbasrep}. 
Thus Lemma \ref{le:NtsumMt} gives $N_m = \tfrac{q^{m-1} - 1}{q-1}$. 
For $m \mid (q-1)$ we have
\[
  N_m = \frac{q^{m-1} - 1}{q-1} + \frac{1}{q} \sum_{x \in \Fq[m]} e_m (\gamma_m^{i_0} x^m)
\]
by Lemmas \ref{le:NtsumMt} and \ref{le:Mtbasrep}.

To determine $N_7$ we note that $r' = \phi(7)/2 = 3 = \ord_7 2$ and $t' = 7r/3$ in \eqref{eq:Gaussdown}. 
Further, as given on \cite[p.~3]{MRRV},
\[
  F_3 (\chi) = -1 + c \sqrt{-7} \tc \qquad c \in \{ 1, -1 \} \tc
\]
in \eqref{eq:Gaussdown} and the use of \cite[Lemma 3]{MRRV} togehter with the above consideration gives
\begin{align*}
  P_7 - \frac{q^6-1}{7(q-1)} = 
  \begin{cases}
    0 &\text{if $3 \nmid r$,} \\
    -\frac{3}{7}(\omega_7^{\frac{7r}{3}} + \bar{\omega}_7^{\frac{7r}{3}}) \sqrt{q^5} 
      &\text{if $3 \mid r$, $7 \mid \ind b$,} \\
    \frac{\sqrt{2}}{7}(\omega_7^{\frac{7r}{3} - 1} + \bar{\omega}_7^{\frac{7r}{3} - 1}) \sqrt{q^5} 
      &\text{if $3 \mid r$, $\ind b \in C_c^7$,} \\
    \frac{\sqrt{2}}{7}(\omega_7^{\frac{7r}{3} + 1} + \bar{\omega}_7^{\frac{7r}{3} + 1}) \sqrt{q^5} 
      &\text{if $3 \mid r$, $\ind b \in C_{-c}^7$,}
  \end{cases}
\end{align*}
where $\omega_7 = (1 + \sqrt{-7})/\sqrt{8}$, bar denotes the complex conjugation and $C_i^N$ denotes the $2$-cyclotomic coset 
modulo $N$ containing $i$.

In the related case $m = 2 \cdot 7 = 14$ we see as above in the case $m = 2v$ that $\mu(14) N_1 + \mu(7) N_2 = 0$ in \eqref{eq:PmsumNt}. 
Thus $P_{14} = \tfrac{1}{14} (N_{14} - N_7)$. 
If $t=7$ then $d = \gcd(2, q-1) = 1$. 
Since $\tfrac{m}{t} = 2$, Lemma \ref{le:Ntspecial} can be applied to get $N_7 = \tfrac{q^7 - 1}{q-1}$. 
In the case $t = 14$, $d=1$ and $l = \gcd(14, q-1) = \gcd(7, q-1)$. 
If $7 \nmid (q-1)$ then $l=1$ and Lemmas \ref{le:NtsumMt} and \ref{le:Mtbasrep} give $N_{14} = \tfrac{q^{13} - 1}{q-1}$. 
By using again Lemmas \ref{le:NtsumMt} and \ref{le:Mtbasrep} we have
\[
  N_{14} = \frac{q^{13} - 1}{q-1} + \frac{1}{q} \sum_{x \in \Fq[m]} e_m (\gamma_m^{i_0} x^7)
\]
if $7 \mid (q-1)$. 
Now $rm = m' \ord_7 2$ or $m' = 14r/3$, and therefore we get as above
\begin{align*}
  P_{14} - \frac{q^7 (q^6-1)}{14(q-1)} = 
  \begin{cases}
    0 &\text{if $3 \nmid r$,} \\
    -\frac{3}{14} (\omega_7^{\frac{14r}{3}} + \bar{\omega}_7^{\frac{14r}{3}}) q^6 
      &\text{if $3 \mid r$, $7 \mid \ind b$,} \\
    \frac{\sqrt{2}}{14} (\omega_7^{\frac{14r}{3} - 1} + \bar{\omega}_7^{\frac{14r}{3} - 1}) q^6 
      &\text{if $3 \mid r$, $\ind b \in C_c^7$,} \\
    \frac{\sqrt{2}}{14} (\omega_7^{\frac{14r}{3} + 1} + \bar{\omega}_7^{\frac{14r}{3} + 1}) q^6 
      &\text{if $3 \mid r$, $\ind b \in C_{-c}^7$}
  \end{cases}
\end{align*}
with the same $c$ and $\omega_7$ as in $P_7$.

For the details of the cases $m = 4 \cdot 7 = 28$ and $m = 23$ we refer to \cite{auxcal} and state here the results:
\begin{align*}
  P_{28} &- \frac{q^{14} (q^{13} - 1)}{28(q-1)} \\
         &= 
  \begin{cases}
    -\tfrac{q^2}{28}     &\text{if $3 \nmid r$,}        \\
    -\frac{3}{28}(\omega_7^\frac{28r}{3} + \conj{\omega}_7^\frac{28r}{3}) q^{13} - (\frac{q}{2})^2 
      &\text{if $3 \mid r$, $7 \mid \ind b$,}      \\
    \frac{\sqrt{2}}{28}(\omega_7^{\frac{28r}{3} - 1} + \conj{\omega}_7^{\frac{28r}{3} - 1}) q^{13} 
      &\text{if $3 \mid r$, $\ind b \in C_c^7$,}   \\
    \frac{\sqrt{2}}{28}(\omega_7^{\frac{28r}{3} + 1} + \conj{\omega}_7^{\frac{28r}{3} + 1}) q^{13} 
      &\text{if $3 \mid r$, $\ind b \in C_{-c}^7$} \\
  \end{cases}
\end{align*}
with the same $c$ and $\omega_7$ as in $P_7$. 
For $m = 23$ the Gauss sum $F_{11} (\chi)$ can be calculated with the method described on \cite[p.~3]{MRRV}. 
We obtain $F_{11} (\chi) = 2^3 (-3 + c \sqrt{-23})$, where $c \in \{ 1, -1 \}$. 
Then
\[
  P_{23} - \frac{q^{22} - 1}{23(q-1)} = 
  \begin{cases}
    0 &\text{if $11 \nmid r$,} \\
    -\frac{11}{23}(\omega_{23}^\frac{23r}{11} + \conj{\omega}_{23}^\frac{23r}{11}) q^\frac{69}{11} &\text{if $11 \mid r$, $23 \mid \ind b$,} \\
    \frac{q^\frac{69}{11}}{23} \Real(\omega_{23}^\frac{23r}{11} (1 + \sqrt{-23}))          &\text{if $11 \mid r$, $\ind b \in C_c^{23}$,}    \\
    \frac{q^\frac{69}{11}}{23} \Real(\omega_{23}^\frac{23r}{11} (1 - \sqrt{-23}))          &\text{if $11 \mid r$, $\ind b \in C_{-c}^{23}$,} \\
  \end{cases}
\]
where $\omega_{23} = 3 - \sqrt{-23}$.

The index $2$ case \ref{it:Npqroot} holds for $m = 15$. 
Now $F_4 (\chi) = 1 + c \sqrt{-15}$ in \eqref{eq:Gaussdown} is given in \cite[Lemma 5]{MRRV}, where $c \in \{ 1, -1 \}$.
We again just state the results for $m = 15$ and the related $m = 30$, and refer to \cite{auxcal} for the details. 
If $m = 15$ then
\[
  P_{15} - \frac{q^{14}-1}{15(q-1)} = -\frac{q^4 + q^2 - 2}{15(q-1)}
\]
for $2 \nmid r$,
\[
  P_{15} - \frac{q^{14}-1}{15(q-1)} = 
  \begin{cases}
    -\frac{1}{15} \bigl( q + 1 + \sqrt{q^{13}} - \sqrt{q} \bigr)             &\text{if $3 \nmid \ind b$,} \\
    -\frac{1}{15} \bigl( \frac{3(q^4 - 1)}{q-1} - 2\sqrt{q^{13}} + (\sqrt{q} + 1)^2 \bigr) &\text{if $3 \mid \ind b$}   \\
  \end{cases}
\]
for $2 \mid r$, $4 \nmid r$, and
\begin{align*}
  &P_{15} - \frac{q^{14}-1}{15(q-1)} = \\
  &\begin{cases}
    -\frac{2}{15} \bigl( 2(\omega_{15}^\frac{15r}{4} + \bar{\omega}_{15}^\frac{15r}{4}) + 1 \pm 2 \bigr) \sqrt{q^{13}} \\
    \hspace{3em}- \frac{1}{5} \bigl( \frac{q^4 - 1}{q-1} \mp 4 \sqrt{q^3} \bigr) - \frac{1}{3} (\sqrt{q} - 1)^2 
      &\text{if $15 \mid \ind b$,} \\
    \frac{1}{15} \bigl( \omega_{15}^\frac{15r}{4} + \bar{\omega}_{15}^\frac{15r}{4} - 2 \pm 1 \bigr) \sqrt{q^{13}} 
      - \frac{1}{5} \bigl( \frac{q^4 - 1}{q-1} \pm \sqrt{q^3} \bigr) 
      &\text{if $\ind b \in C_3^{15}$,} \\
    \frac{1}{15} \bigl( 2(\omega_{15}^\frac{15r}{4} + \bar{\omega}_{15}^\frac{15r}{4}) + 1 \mp 4 \bigr) \sqrt{q^{13}} 
      - \frac{1}{3} (q + 1 + \sqrt{q}) 
      &\text{if $\ind b \in C_5^{15}$,} \\
    \frac{1}{15} \bigl( 2(\omega_{15}^{\frac{15r}{4} + 1} + \bar{\omega}_{15}^{\frac{15r}{4} + 1}) + 1 \pm 1 \bigr) \sqrt{q^{13}} 
      &\text{if $\ind b \in C_c^{15}$,} \\
    \frac{1}{15} \bigl( 2(\omega_{15}^{\frac{15r}{4} - 1} + \bar{\omega}_{15}^{\frac{15r}{4} - 1}) + 1 \pm 1 \bigr) \sqrt{q^{13}} 
      &\text{if $\ind b \in C_{-c}^{15}$}
  \end{cases}
\end{align*}
for $4 \mid r$, where $\pm = (-1)^\frac{r}{4}$ and $\omega_{15} = -(1 + \sqrt{-15})/4$.

If $m = 30$ then
\[
  P_{30} - \frac{q^{15} (q^{14} - 1)}{30(q-1)} = -\frac{q^3 (q^6 - 1)}{30(q-1)}
\]
for $2 \nmid r$,
\[
  P_{30} - \frac{q^{15} (q^{14} - 1)}{30(q-1)} = 
  \begin{cases}
    -\frac{q^3 (q^2 - 1)}{30(q-1)} + \frac{q^2}{30} (q^{12} - 1)           &\text{if $3 \nmid \ind b$,} \\
    -\frac{q^3 (3 q^6 - 2 q^2 - 1)}{30(q-1)} - \frac{q^2}{15} (q^{12} - 1) &\text{if $3 \mid \ind b$}   \\
  \end{cases}
\]
for $2 \mid r$, $4 \nmid r$, and
\begin{align*}
  P_{30} &- \frac{q^{15} (q^{14} - 1)}{30(q-1)} \\
  &= 
  \begin{cases}
    -\frac{1}{15} \bigl( 2(\omega_{15}^\frac{15r}{2} + \bar{\omega}_{15}^\frac{15r}{2}) + 3 \bigr) q^{14} \\
    \hspace{3em}- \frac{q^5 (q^4 - 1)}{10(q-1)} - \frac{q^3}{6} (q+1) + \frac{q^2}{15} (6q^2 + 5) 
      &\text{if $15 \mid \ind b$,} \\
    \frac{1}{30} \bigl( \omega_{15}^\frac{15r}{2} + \bar{\omega}_{15}^\frac{15r}{2} - 1 \bigr) q^{14} 
      - \frac{q^4 (q^5 - 1)}{10(q-1)} 
      &\text{if $\ind b \in C_3^{15}$,} \\
    \frac{1}{30} \bigl( 2(\omega_{15}^\frac{15r}{2} + \bar{\omega}_{15}^\frac{15r}{2}) - 3 \bigr) q^{14} 
      - \frac{q^2(q^3 - 1)}{6(q-1)} 
      &\text{if $\ind b \in C_5^{15}$,} \\
    \frac{1}{15} \bigl( \omega_{15}^{\frac{15r}{2} + 1} + \bar{\omega}_{15}^{\frac{15r}{2} + 1} + 1 \bigr) q^{14} 
      &\text{if $\ind b \in C_c^{15}$,} \\
    \frac{1}{15} \bigl( \omega_{15}^{\frac{15r}{2} - 1} + \bar{\omega}_{15}^{\frac{15r}{2} - 1} + 1 \bigr) q^{14} 
      &\text{if $\ind b \in C_{-c}^{15}$,}
  \end{cases}
\end{align*}
for $4 \mid r$, where $\omega_{15} = -(1 + \sqrt{-15})/4$. 

The index $2$ case \ref{it:Npqnoro} holds for $m = 21$. 
The Gauss sums $F_6 (\chi) = -2(3 + c\sqrt{-7})$ and $F_6 (\chi^3) = 2(3 + c\sqrt{-7}) = -F_6 (\chi)$, where $c \in \{ 1, -1 \}$, 
are computed in \cite[Example 11]{RinThe}. 
Then
\[
  P_{21} - \frac{q^{20} - 1}{21(q-1)} = -\frac{q^6 + q^2 - 2}{21(q-1)}
\]
for $2 \nmid r$, $3 \nmid r$,
\[
  P_{21} - \frac{q^{20} - 1}{21(q-1)} = 
  \begin{cases}
    -\frac{1}{21} \bigl( q + 1 \mp (q^9 - 1) \sqrt{q} \bigr)                        &\text{if $3 \nmid \ind b$,} \\
    -\frac{1}{21} \bigl( \frac{3q^6 + q^2 - 4}{q-1} \pm 2 (q^9 - 1) \sqrt{q} \bigr) &\text{if $3 \mid \ind b$}   \\
  \end{cases}
\]
for $2 \mid r$, $3 \nmid r$,
\begin{align*}
  &P_{21} - \frac{q^{20} - 1}{21 (q-1)} = \\
  &\begin{cases}
    - \bigl( \frac{q^6 + 7q^2 - 8}{21(q-1)} + \frac{\sqrt{q^5}}{7} \bigl( (\omega_7^{7r} + \bar{\omega}_7^{7r}) q^7 
       + (\omega_7^\frac{7r}{3} + \bar{\omega}_7^\frac{7r}{3}) \bigr) \bigr)
      &\text{if $7 \mid \ind b$,}       \\
    -\frac{q^6 - 1}{21(q-1)} + \frac{\sqrt{2q^5}}{21} \bigl( (\omega_7^{7r - 1} + \bar{\omega}_7^{7r - 1}) q^7 
      - (\omega_7^{\frac{7r}{3} + 1} + \bar{\omega}_7^{\frac{7r}{3} + 1}) \bigr) \hspace{-3em}\\
      &\text{if $\ind b \in C_c^7$,}    \\
    -\frac{q^6 - 1}{21(q-1)} + \frac{\sqrt{2q^5}}{21} \bigl( (\omega_7^{7r + 1} + \bar{\omega}_7^{7r + 1}) q^7 
      - (\omega_7^{\frac{7r}{3} - 1} + \bar{\omega}_7^{\frac{7r}{3} - 1}) \bigr) \hspace{-3em}\\
      &\text{if $\ind b \in C_{-c}^7$}     \\
  \end{cases}
\end{align*}
for $2 \nmid r$, $3 \mid r$, and
\begin{align*}
  &P_{21} - \frac{q^{20} - 1}{21 (q-1)} = \\
  &\begin{cases}
    -\bigl( \frac{1}{21} \bigl( 3(2 \pm 1) (\omega_{21}^\frac{7r}{2} + \bar{\omega}_{21}^\frac{7r}{2}) \pm 2 q^7 \bigr) \sqrt{q^5} \\
      \hspace{1.5em}+ \frac{1}{7} \bigl( \tfrac{q^6 - 1}{q-1} - 3(\omega_7^\frac{7r}{3} + \bar{\omega}_7^\frac{7r}{3}) \sqrt{q^5} \bigr) 
      + \frac{1}{3} (1 \mp \sqrt{q})^2 \bigr)                           &\text{if $21 \mid \ind b$,} \\
    \frac{1}{21} \bigl( 3(1 \mp 1) (\omega_{21}^\frac{7r}{2} + \bar{\omega}_{21}^\frac{7r}{2}) \pm q^7 \bigr) \sqrt{q^5} 
      - \frac{1}{3} (q + 1 \pm \sqrt{q})                                          &\text{for $C_7^{21}$,} \\
    \frac{\sqrt{q^5}}{21} \bigl( 2 \Real(\omega_{21}^\frac{7r}{2} (1 + \sqrt{-7})) 
      \pm \Real(\omega_{21}^\frac{7r}{2} (1 - \sqrt{-7})) \mp 2q^7 \bigr) \hspace{-4pt} \\
      \hspace{1.5em}- \frac{1}{7} \bigl( \tfrac{q^6 - 1}{q-1} + (\omega_7^{\frac{7r}{3} - 1} + \bar{\omega}_7^{\frac{7r}{3} - 1}) 
      \sqrt{2q^5} \bigr)                                                                         &\text{for $C_{3c}^{21}$,} \\
    \frac{\sqrt{q^5}}{21} \bigl( 2 \Real(\omega_{21}^\frac{7r}{2} (1 - \sqrt{-7})) 
      \pm \Real(\omega_{21}^\frac{7r}{2} (1 + \sqrt{-7})) \mp 2q^7 \bigr) \hspace{-4pt} \\
      \hspace{1.5em}- \frac{1}{7} \bigl( \tfrac{q^6 - 1}{q-1} + (\omega_7^{\frac{7r}{3} + 1} + \bar{\omega}_7^{\frac{7r}{3} + 1}) 
      \sqrt{2q^5} \bigr)                                                                        &\text{for $C_{-3c}^{21}$,} \\
    \frac{\sqrt{q^5}}{21} \bigl( -\Real(\omega_{21}^\frac{7r}{2} (1 - \sqrt{-7})) \pm \Real(\omega_{21}^\frac{7r}{2} (1 + \sqrt{-7})) 
      \pm q^7 \bigr) \hspace{-4pt} &\text{for $C_c^{21}$,} \\
    \frac{\sqrt{q^5}}{21} \bigl( -\Real(\omega_{21}^\frac{7r}{2} (1 + \sqrt{-7})) \pm \Real(\omega_{21}^\frac{7r}{2} (1 - \sqrt{-7})) 
      \pm q^7 \bigr) \hspace{-4pt} &\text{for $C_{-c}^{21}$}
  \end{cases}
\end{align*}
for $6 \mid r$, where $C_i^{21}$ indicates the cyclotomic coset that $\ind b$ belongs to and $\pm = (-1)^\frac{r}{2}$. 
In addition, $\omega_7 = (1 + \sqrt{-7})/\sqrt{8}$ is as in $P_7$ and $\omega_{21} = 3 + \sqrt{-7}$.

The irreducible polynomial generator in \cite{polgen} can be used to cross-check our formulae for small values of $q$ and $m$. 
It lists every irreducible polynomial over $\Fq$, $q \le 8$, of a given degree $m$ if there are at most $1000$ such polynomials. 
We can use \cite{polgen} to list every irreducible polynomial of degree $m \le 13$, $6$, $3$ over $\F_2$, $\F_4$, $\F_8$, respectively. 
One can then pick the polynomials with $a=0$ from this list. 
On the other hand, the formulae of this subsection give the number of these polynomials. 
For example, let $m=3$. 
If $q=2$, $8$, then \eqref{eq:P3} gives $P_3 = \tfrac{q^2 - 1}{3 \cdot (q-1)}$ for every $b$. 
The $P_3$ equals to $1$ if $q=2$, and to $3$ if $q=8$. 
If $q=4$ then \eqref{eq:P3} gives $P_3 = \tfrac{q^2 - 1}{3 \cdot (q-1)} - \tfrac{1}{3} \sqrt{q} = 1$ for $3 \nmid \ind b$ 
($b \ne 1$; $2$ values), and $P_3 = \tfrac{q^2 - 1}{3 \cdot (q-1)} + \tfrac{2}{3} \sqrt{q} = 3$ for $3 \mid \ind b$ ($b=1$; $1$ value). 
The other values in Table \ref{tbl:smallpol} are obtained similarly. 
Our results agree with those obtained using \cite{polgen}.

\begin{table}[!htb]
\centering
\caption{The number of the irreducible polynomials with $a=0$ and $b$ fixed for small $q$ and $m$.}
\label{tbl:smallpol}
\begin{tabular}{ccccccccccccc}
\hline
     $q$      &                           \multicolumn{12}{c}{$m$}                              \\
\cline{2-13}
              & $2$ & $3$ & $4$ & $5$  & $6$  & $7$ & $8$  & $9$  & $10$ & $11$ & $12$  & $13$  \\
\hline
     $2$      & $0$ & $1$ & $1$ & $3$  & $4$  & $9$ & $14$ & $28$ & $48$ & $93$ & $165$ & $315$ \\
  $4$, $b=1$  & $0$ & $3$ & $4$ & $17$ & $48$                                                   \\
$4$, $b \ne1$ & $0$ & $1$ & $4$ & $17$ & $56$                                                   \\
     $8$      & $0$ & $3$                                                                       \\
\hline
\end{tabular}
\end{table}

%% Total number of the irr. pols:
%% 
%% \begin{tabular}{ccccccccccccc}
%% \hline
%% $q$ &                                \multicolumn{12}{c}{$m$}                                 \\
%% \cline{2-13}
%%     & $2$  &  $3$  & $4$  &  $5$  &  $6$  & $7$  & $8$  & $9$  & $10$ & $11$  & $12$  & $13$  \\
%% \hline
%% $2$ & $1$  &  $2$  & $3$  &  $6$  &  $9$  & $18$ & $30$ & $56$ & $99$ & $186$ & $335$ & $630$ \\
%% $4$ & $6$  & $20$  & $60$ & $204$ & $670$                                                     \\
%% $8$ & $28$ & $168$                                                                            \\
%% \hline
%% \end{tabular}

\end{document}